\numberwithin{equation}{section}
\pgfplotsset{compat=1.15}
\definecolor{ffffvc}{rgb}{1.,1.,0.3607843137254902}
\definecolor{wwzzff}{rgb}{0.4,0.6,1.}
\newcommand\cA{{\mathcal A}}
\newcommand\cB{{\mathcal B}}
\newcommand\PG{{\rm PG}}
\newcommand\cG{{\mathcal G}}
\newcommand\cL{{\mathcal L}}
\newcommand\cP{{\mathcal P}}
\newcommand\cQ{{\mathcal Q}}
\newcommand\PP{{\mathbb P}}
\newcommand{\nn}{\mathbb{N}}
\newcommand{\zz}{\mathbb{Z}}
\newcommand{\R} {\mathbb R}
\newcommand{\Bin}[2]{\operatorname{Bin}\left(#1,#2\right)}
\newcommand{\Rnnp}{\R(n,n,p)}
\newcommand{\N} {\mathbb N}
\newcommand{\G} {\mathbb G}
\newcommand{\Gnnp} {\G(n,n,p)}
\newcommand{\Gnnpprime} {\G(n,n,p')}
\newcommand{\leteq}{\coloneqq}
\newcommand{\cardinality}[1]{\left|#1\right|}
\declaretheorem[style=plain,name=Theorem,numberwithin=section]{theorem}
\declaretheorem[style=plain,name=Proposition,sibling=theorem]{prop}
\declaretheorem[style=plain,name=Lemma,sibling=theorem]{lemma}
\declaretheorem[style=plain,name=Problem,sibling=theorem]{problem}
\declaretheorem[style=plain,name=Corollary,sibling=theorem]{cor}
\declaretheorem[style=definition,name=Definition,sibling=theorem]{defi}
\declaretheorem[style=definition,name=Definition,sibling=theorem]{defn}
\declaretheorem[style=definition,name=Conjecture,sibling=theorem]{conj}
\declaretheorem[style=definition,name=Remark,sibling=theorem]{remark}
\title{Settling the no-$(k+1)$-in-line problem when $k$ is not small}
\author{Benedek Kovács\thanks{ELTE Linear Hypergraphs Research Group, Eötvös Loránd University, Budapest, Hungary. Supported by the EKÖP-24 University Excellence Scholarship Program of the Ministry for Culture and Innovation from the source of the National Research, Development and Innovation Fund and the University Excellence Fund of Eötvös Loránd University.
		E-mail: {\tt benoke981@gmail.com}}
	\and Zolt\'an L\'or\'ant Nagy\thanks{ELTE Linear Hypergraphs  Research Group,
		E\"otv\"os Lor\'and University, Budapest, Hungary. The author is supported by the University Excellence Fund of Eötvös Loránd University.	E-mail: {\tt nagyzoli@cs.elte.hu}}
  \and Dávid R. Szabó\thanks{ELTE Linear Hypergraphs  Research Group,
		E\"otv\"os Lor\'and University, Budapest, Hungary. The author is supported by  the University Excellence Fund of Eötvös Loránd University. E-mail: {\tt szabo.r.david@gmail.com}}}
\date{}
\begin{document}
	
\maketitle

\begin{abstract} 
    What is the  maximum number of points that can be selected from an $n \times n$ square lattice   such that no $k+1$ of them are in a line? This has been asked more than $100$ years ago for $k=2$ and it remained wide open ever since. In this paper, we prove  the precise answer is $kn$, provided that $k>C\sqrt{n\log{n}}$ for an absolute constant $C$. The proof relies on carefully constructed bi-uniform random bipartite graphs and concentration inequalities.    
\end{abstract}

\section{Introduction}

A set of points in the plane is said to be in \emph{general position} if no three of the points lie on a common line.  Motivated by a problem concerning the placement of chess pieces, Dudeney \cite{dudeney1958amusements} asked how many points may be placed in an $n \times n$ grid so that the points are in general position.  This No-Three-In-Line problem has received considerable attention: for history and background, we refer to the excellent book of Brass, Moser, and Pach \cite{Brass2005} and that of Eppstein  \cite{Eppstein:2018}. 
Brass, Moser and Pach called this one of the oldest and most extensively studied geometric questions concerning lattice points. For higher dimensional variants, we refer to
\cite{por2007no}.

Concerning an upper bound, it is straightforward to see that at most $2n$ points can be placed in general position.  For rather small $n$, several examples have been constructed where the theoretical bound $2n$ can be attained, see e.g. \cite{anderson1979update, flammenkamp1998progress}. However, it is still an open problem to determine the answer for general $n$.

The earliest lower bound is due to Erd\H{o}s (appearing in a paper
published by Roth \cite{Roth:1951}), and uses the \emph{modular parabola},
consisting of the points $(i,i^{2})\pmod p$. If $n=p$ is a prime
number, then this yields $n$ points in general position in $\zz^{2}\cap[1,n]^{2}$.
If $n$ is not a  prime, then taking $p$ to be the largest prime not exceeding
$n$  and applying the modular parabola in a $p\times p$ subgrid yields $n-o(n)$ points in general position.
The best general lower bound for the No-Three-In-Line problem relies on a construction   due to Hall, Jackson, Sudbery, and Wild \cite{hall1975some}.  Their construction is built up from constructions in  subgrids where one places points on a hyperbola $xy = t \pmod p$, where $p$ is a prime slightly smaller than $n/2$, and yields $\frac{3}{2}n - o(n)$ points in general position. 
Interestingly, even what to conjecture on the \textit{asymptotical value} of the answer for the No-Three-In-Line problem is far from clear, with prominent experts believing it could be roughly $1.5n$ via the previous construction (see Green \cite{green100}), or it could be $2n$ thus the upper bound can be attained asymptotically in general \cite{Brass2005}, or somewhere in between \cite{Eppstein:2018, Guy/Kelly:1968}. Some related problems were studied recently in \cite{Aichholzer/Eppstein/Hainzl:2023} and \cite{nagy2023extensible}.

We are interested in a generalization which might be the most natural one, where a collinear point set of $k+1$ points is forbidden in the grid $[1, n]\times [1, n]$.

\begin{defi}
	Let $f_k(n)$ denote the maximum size  of a point set chosen from the vertices  of  the $n\times n$ grid $\cG=[1, n]\times [1, n]$ in which at most $k$ points lie on any line.
\end{defi}

Note that in a more general context, this problem  has been studied before, e.g. by Lefmann \cite{10.1007/978-3-540-68880-8_25}, but no exact value has been determined up to our best knowledge. 
Our result shows that the upper bound, which is based on a simple pigeonhole argument, is tight when the order of magnitude of $k$ is slightly larger than $\sqrt{n}$.

\begin{theorem}\label{main} 
	For every $C>\frac{5}{2}\sqrt{35}\approx 14.79$, there exists $N_C$ such that 
	whenever $n\geq N_C$ and $k\geq C\sqrt{n\log n}$, we have 
	\[f_k(n)=kn.\]
\end{theorem}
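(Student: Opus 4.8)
Here is how I would attack the lower bound; the upper bound I would dispose of first, since the $n$ horizontal lines partition $\cG$ into $n$ lines, each meeting a chosen set in at most $k$ points, whence $f_k(n)\le kn$. The whole content is the matching lower bound, and its shape is already dictated: a point set of size exactly $kn$ must meet every row in \emph{exactly} $k$ points, and counting columns, every column in exactly $k$ points, so it is precisely the edge set of a $k$-regular bipartite graph between the rows and the columns --- a bi-uniform bipartite graph. The task is to build one whose $kn$ points contain no $k+1$ collinear ones, and since rows and columns are automatically fine, only lines of other slopes need checking.

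Next I would stratify lines by how many grid points they carry: a line of rational slope $a/b$ in lowest terms has consecutive grid points at sup-distance $\max(|a|,|b|)$, so it meets $\cG$ in at most $\frac{n-1}{\max(|a|,|b|)}+1$ points. Since $k\ge C\sqrt{n\log n}$, this is at most $(1-\Omega(1))k$ whenever $\max(|a|,|b|)\ge 2$; in a density-$\tfrac kn$ construction such a line has expected load $\approx\tfrac k2$, and a Chernoff bound gives it probability $e^{-\Omega(k)}$ of exceeding $k$, so a union bound over the $O(n^4)$ lines handles all slopes $\ne 0,\infty,\pm1$ as soon as $k\gg\log n$. What is left is slope $\pm1$: rows and columns aside, the main diagonal has expected load \emph{exactly} $k$ and a diagonal of length $n-c$ has expected load $k-ck/n$, so each of the $\Theta(n^{3/4}(\log n)^{1/4})$ near-diagonals of length $n-o(n)$ overshoots $k$ with probability $\Omega(1)$. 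A generic random $k$-regular bipartite graph therefore cannot work: these lines must be rigidly controlled.

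Hence the construction should be a carefully built, non-uniform random biregular graph whose slope-$0,\infty,\pm1$ behaviour is forced. I would take a prime $p$ near $n$ and, on $[1,p]^2\cong\zz_p^2$, use the explicit \emph{modular} template $\{(x,y):(y-a_gx)\bmod p\in\{0,1,\dots,k_0-1\}\}$ with a multiplier $a_g\in\zz_p^{\times}$ chosen so that no rational of height below $p/(k_0-1)$ is $\equiv a_g\pmod p$ --- possible because there are only $O\bigl((p/k_0)^2\bigr)=o(p)$ forbidden residues once $k_0\gtrsim\sqrt p$. Then every line through the core whose slope is not $\equiv a_g$ lies inside a ``modular line'' carrying exactly $k_0$ points of the template, while a line whose slope does reduce to $a_g$ has height at least $p/(k_0-1)$ and so at most $k_0$ grid points to begin with; in particular all slope-$\pm1$ lines carry at most $k_0$ points, and for $n$ prime (take $k_0=k$) this already proves $f_k(n)=kn$. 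For composite $n$ I would complete the template to $[1,n]^2$ by inserting the remaining $k(n-p)$ points in the border $[1,n]^2\setminus[1,p]^2$ as a random biregular set that is also balanced along diagonals and anti-diagonals, using $k_0=k-\Theta\bigl(\tfrac{(n-p)k}{n}+\sqrt{\tfrac{(n-p)k}{n}\log n}\bigr)$ as built-in slack; concentration of the random part shows the border adds within the slack to each slope-$\pm1$ line and at most $(\tfrac12+o(1))k$ to every other line, and the Chernoff/union bound above then closes the argument.

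The real obstacle --- and the point at which the hypothesis $k\ge C\sqrt{n\log n}$ with the explicit constant $C>\tfrac52\sqrt{35}$ actually gets spent --- is reconciling the deterministic core with the random border: one must ensure that a line which is nearly full in the core picks up almost nothing from the border and conversely, which forces the border density and the core window to be tuned against each other, and it is the quantitative cost of this tuning (a Bernstein estimate on the border load of lines of length $n-o(n)$, weighed against the number of such lines) that fixes the threshold and the constant. The remaining care goes into a Gale--Ryser/flow-type lemma that the prescribed balanced biregular border set exists, the existence of the multiplier $a_g$ in the relevant range, and the $n$ whose nearest prime is comparatively far (a larger but still $o(n)$ border, by standard prime-gap estimates).
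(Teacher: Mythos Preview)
Your diagnosis through the identification of slopes $\pm 1$ as the sole obstruction is correct, and the modular template $\{(x,y):(y-a_gx)\bmod p\in[0,k-1]\}$ with $a_g$ avoiding every small-height rational is a valid --- indeed elegant --- construction when $n=p$ is prime; it already works for $k\gtrsim\sqrt{n}$, stronger than required. This is, however, \emph{not} the paper's route. The paper partitions $[1,n]^2$ into a $4\times 4$ array of $(n/4)$-subgrids, labels the eight subgrids on the two block-diagonals \emph{sparse} and the remaining eight \emph{dense}, and places in each an independent uniformly random $r$-factor with $r=2k/10$ (sparse) or $3k/10$ (dense). Row and column sums are then exactly $k$, while every slope-$\pm 1$ line has expected load at most $\tfrac{4}{5}k$: this $\tfrac15 k$ reserve is precisely what the sparse/dense pattern buys and is what makes the union bound close. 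Concentration is obtained not from vanilla Chernoff but from an Isaev--McKay estimate on the probability that a random regular bipartite graph contains a given matching, fed into a Linial--Luria inequality; the constant $\tfrac{5}{2}\sqrt{35}$ is an artefact of $(1-\tfrac45)^{-1}=5$ together with the union bound over the $\le 7$ subgrids a line meets and the $O(n^4/\kappa^3)$ rich lines. The residual divisibility hypotheses $4\mid n$, $10\mid k$ are removed by short ``adjusting'' lemmas that spend the reserve.

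Your extension to composite $n$, by contrast, has a genuine gap. The core on $[1,p]^2$ has \emph{zero} reserve: the main diagonal carries exactly $k_0$ template points. Taking $k_0<k$ and completing via an L-shaped border forces each core row to receive $k-k_0$ points in the $n-p$ new columns, so the strip $[1,p]\times[p+1,n]$ has column-degree $p(k-k_0)/(n-p)$, and hence the corner block $[p+1,n]^2$ carries row-degree $k-p(k-k_0)/(n-p)$; non-negativity demands $k-k_0\le k(n-p)/p$. On the other hand, the main diagonal passes only through the corner block outside the core, and a computation shows its expected total load equals $2k-(k-k_0)n/(n-p)$, which is $\le k$ only when $k-k_0\ge k(n-p)/n$. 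The two constraints leave a window of width $k(n-p)^2/(np)$ for $k-k_0$, but beating the $\sqrt{(n-p)\log n}$ fluctuation on the $\Theta(n)$ near-diagonals then forces $n-p\gtrsim n^2\log n/k^2$, which for $k\asymp\sqrt{n\log n}$ means $n-p=\Omega(n)$ --- no prime $p$ can satisfy this. So the random biregular border you describe does not exist with workable parameters in the stated range of $k$; the ``balanced along diagonals and anti-diagonals'' refinement would need both a four-way Gale--Ryser existence lemma and concentration under those constraints, neither of which you supply. Your closing account of where $\tfrac{5}{2}\sqrt{35}$ is spent therefore cannot be right either: that number comes from the paper's $\delta=\tfrac45$ and block geometry, not from any Bernstein trade-off in your scheme.
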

In fact, we may take any $C>25/2=12.5$, see \autoref{rem:C>12.5}.

To put this result into perspective, we recall the projective and affine variants of the problem, which are also of importance.
For a given set of points, a line is called a \textit{secant} if it intersects the point set in at least two points.  Point sets having no secants of size larger than $2$ are called \textit{arcs}, while in general, point sets of a projective plane having no secants of size larger than $k$ are called $k$-\textit{arcs}. Following earlier work of Bose \cite{bose1947mathematical}, Segre showed that for odd $q$, every arc of $q+1$ points in a desarguesian projective geometry $\PG(2,q)$
is a conic \cite{Segre:1955a, Segre:1955}. For general $k$, the theoretical maximum size of a $k$-arc, also based on the pigeonhole principle, is $(q+1)(k-1)+1$ both in the projective and affine setting, however this can only be attained if $q$ is a power of $2$ and $k$ is a divisor of $q$ due to the results of Denniston, and of Ball, Blokhuis and Mazzocca \cite{ball1997maximal, ball2005bounds, denniston1969some}. 
This in turn gives  the following consequence:

\begin{theorem} Let $p>2$.
	In  the affine plane $\mathbb{F}_p^2$, the trivial upper bound $1+(p+1)k$ cannot be attained for the no-$(k+1)$-in-line problem.
\end{theorem}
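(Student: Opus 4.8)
The plan is to show that an extremal configuration for the affine no-$(k+1)$-in-line problem would be a maximal arc in $\PG(2,p)$, and then to invoke the non-existence results of Denniston and of Ball, Blokhuis and Mazzocca already quoted above. Recall first how the bound arises: if $\cS\subseteq\ff_p^2$ has no $k+1$ collinear points, fix $P\in\cS$; the $p+1$ lines through $P$ partition $\cS\setminus\{P\}$ and each carries at most $k-1$ further points of $\cS$, so $\cardinality{\cS}\le 1+(p+1)(k-1)$. In particular $\cardinality{\cS}<1+(p+1)k$, which already yields the stated form; but in fact even the sharper value $1+(p+1)(k-1)$, which is the largest possible size of a $k$-arc, is not attained, and this is what I would prove.

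Assume for contradiction that some such $\cS$ satisfies $\cardinality{\cS}=1+(p+1)(k-1)$. Embed $\ff_p^2=\PG(2,p)\setminus\ell_\infty$. Since $\cS$ is disjoint from $\ell_\infty$, every secant of $\cS$ in $\PG(2,p)$ is the projective closure of an affine line, hence meets $\cS$ in at most $k$ points. Running the counting argument inside $\PG(2,p)$: for each $P\in\cS$ the $p+1$ lines through $P$ cover the $(p+1)(k-1)$ points of $\cS\setminus\{P\}$, each meeting $\cS$ in at most $k$ points, so each meets $\cS$ in exactly $k$ points. Thus every line of $\PG(2,p)$ through a point of $\cS$ is a $k$-secant while every other line is external, i.e.\ $\cS$ is a maximal arc of degree $k$ in $\PG(2,p)$. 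We may assume $1<k<p$ (for $k\le 1$ the bound is just a single point, and for $k\ge p$ it is at least $\cardinality{\ff_p^2}=p^2$, so those ranges are degenerate). Since $p>2$ is prime, $p$ is odd and is not a power of $2$; by the cited classification, $\PG(2,p)$ contains no maximal arc of degree $k$ with $1<k<p$, contradicting the existence of $\cS$.

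I do not expect a real obstacle: the whole difficulty has been outsourced to the (deep, but already quoted) classification of maximal arcs, and what remains is routine. The two points needing a little care are the passage from the affine to the projective plane — one must check that the line at infinity cannot act as an extra secant, so that an extremal affine set genuinely is a projective maximal arc — and the treatment of the degenerate ranges of $k$, which is precisely where the hypothesis that $p$ be odd (i.e.\ $p>2$) enters.
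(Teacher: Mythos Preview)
Your approach is exactly the one the paper has in mind: the theorem is stated there merely as a ``consequence'' of the Denniston and Ball--Blokhuis--Mazzocca results on maximal arcs, with no further argument given, and you have correctly supplied the missing link by showing that an extremal set would be a maximal arc of degree $k$ in $\PG(2,p)$. Your observation that the bound $1+(p+1)k$ as printed is already strictly larger than the pigeonhole bound $1+(p+1)(k-1)$ --- so that the statement as literally written is trivial, while the intended statement (with $k-1$ in place of $k$) is the one genuinely requiring the cited deep results --- is well spotted and handled appropriately; just be explicit that for the degenerate values $k=1$ and $k=p$ the sharper bound \emph{is} attained (by a point and by all of $\ff_p^2$, respectively), so your strengthened claim holds only for $1<k<p$, though the theorem as stated holds for all $k$.
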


\paragraph{Outline}
We finish the chapter by outlining the proof of our main theorem and describing the organisation of the paper.
The first main idea is to consider the problem in a graph-theoretical setting as follows:

\begin{problem}\label{graph_p}
	What is the maximum number of edges of a spanning subgraph $G$ of a balanced bipartite graph $K_{n,n}$, where each vertex has degree $\le k$ and for a given set $\mathcal{M}$ of  matchings of $K_{n,n}$, every matching intersects $E(G)$ in  at most $k$ edges?
\end{problem}

\noindent To obtain the no-$(k+1)$-in-line problem, we let $\mathcal{M}$ consist of the matchings corresponding to collinear grid points.

The second main idea is to choose the point set of size $kn$ from the grid (or the $k$-regular subgraph of $K_{n,n}$) in a random manner, in such a way that the expected number of the chosen points from $\ell$ is significantly less than $k$ for every line $\ell$ which is not horizontal or vertical (\autoref{def:feasible}). To do so, we design a bi-uniform random graph (\autoref{def:bi-unfonorm}), in which the probability that an individual edge is contained in a graph will be one of two prescribed values, depending on the edge class it belongs to.
After this step, we may apply concentration inequalities to show the existence of a suitable point set.

In our approach, the existence of a suitable random point set demanded a divisibility condition on the size of the grid. Thus we also show a method which provides suitable  sets of $kn'$ points in an $n'\times n'$ grid from previously constructed suitable sets  of size $kn$ in an $n\times n$ grid, where $n$ is only slightly smaller than $n'$ and satisfies the required divisibility condition.

Our paper is organized as follows. In \autoref{sec:prelim}, we set the notation and introduce the main tools for the proof. We carry out the case when $k$ is very large in \autoref{sec:nagyk}. 
\autoref{sec:4|n} is devoted to the proof of \autoref{main} in the case $k$ is not very large assuming certain divisibility conditions on $n$ and $k$. 
In \autoref{sec:generalN}, we extend our result by dropping the divisibility conditions on $n$ and $k$.  Finally, we discuss some open problems and alternative proof techniques in \autoref{sec:openQUestions}.

\section{Preliminaries}\label{sec:prelim}

\subsection{Notation}
If $a,b\in \zz$, then the notation $[a,b]$ will refer to the integer interval $\{x\in \zz: a\le x\le b\}$.
A \textit{grid} is a set $\cG=I_1\times I_2$ where $I_1=[a_1,b_1]$ and $I_2=[a_2,b_2]$ for some integers $a_1\le b_1$ and $a_2\le b_2$. We say that $\cG$ is an $n_1\times n_2$ grid, where $n_j=|I_j|=b_j-a_j+1$ for $j=1,2$. We call $\cG$ a \textit{square grid} if $n_1=n_2$.

We will often identify a subset $S$ of a grid $\cG$ with a bipartite graph with vertex classes $I_1$ and $I_2$, where edge $(i_1,i_2)$ appears in the graph if and only if $(i_1,i_2)\in S$. An \textit{$r$-factor} in a bipartite graph is a spanning subgraph where every vertex has degree $r$. Analogously, an $r$-factor in a grid $\cG$ is a subset of $\cG$ containing exactly $r$ points in each row and column. A \textit{$t$-matching} is a matching of size $t$, while in a grid it corresponds to $t$ points with no two lying on the same axis-parallel line.

A \textit{secant} $\ell$ for a grid $\cG$ is a line of the Euclidean plane that meets $\cG$ in at least two points. We call a line $\ell$ \textit{generic} if it is not horizontal or vertical. Each generic secant $\ell$ has a unique direction vector $\mathbf{v}=(v_x, v_y)$ such that $v_x\in \zz^{+}$, $v_y\in \zz\setminus \{0\}$ and $\gcd(v_x,v_y)=1$. In this case, call $M=\max(|v_x|,|v_y|)$ the \textit{modulus} of $\ell$. Let $\cL(\cG)$ be the set of all generic secants of $\cG$, and for a fixed real $\kappa\ge 0$, let $\cL_{>\kappa}(\cG)=\{\ell\in \cL(\cG): |\ell\cap \cG|>\kappa\}$ be defined as the generic secants admitting more than $\kappa$ grid points.

A set $S\subseteq \cG$ is a \emph{no-$(k+1)$-in-line set} if $|S\cap \ell|\leq k$ for every Euclidean line $\ell$. 
A no-$(k+1)$-in-line set $S\subseteq \cG$ is said to have \emph{reserve $h$} for some $h\in \N$, if $|S\cap \ell|\leq k-h$ for every $\ell\in\cL(\cG)$.

For events $E_n$ indexed with a parameter $n$, we say that $E_n$ occurs \textit{with high probability} (w.h.p.) if the probability of $E_n$ tends to $1$ as $n\to\infty$.

\subsection{Tools}

\paragraph{Concentration results}
The well-known Chernoff--Hoeffding bound states that the sum of independent $\{0, 1\}$-valued random variables is highly concentrated around the expected value.

\begin{theorem}[Chernoff bound for the binomial distribution]\label{cor:chern_binomial}
	Let $\Bin{m}{\alpha}$ denote the binomial distribution with $m$ trials and success probability $\alpha$. If $X\sim \Bin{m}{\alpha}$, then  for any $\alpha < \beta\le 1$ we have $$\PP(X>\beta m)\le e^{-D(\beta||\alpha)m},$$
	where $D(\cdot || \cdot)$ is the so-called relative entropy function satisfying $D(\beta || \alpha)\ge 2(\beta-\alpha)^2.$
\end{theorem}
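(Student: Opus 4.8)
The plan is the classical exponential moment (Laplace transform) method. For any $t>0$, Markov's inequality applied to $e^{tX}$ gives
\[
\PP(X>\beta m)\le \PP\!\left(e^{tX}\ge e^{t\beta m}\right)\le e^{-t\beta m}\,\EE\!\left[e^{tX}\right].
\]
Since $X\sim\Bin{m}{\alpha}$ is a sum of $m$ independent Bernoulli($\alpha$) variables, the moment generating function factorises: $\EE[e^{tX}]=\bigl(1-\alpha+\alpha e^{t}\bigr)^{m}$. Hence for every $t>0$,
\[
\PP(X>\beta m)\le \Bigl(e^{-t\beta}\bigl(1-\alpha+\alpha e^{t}\bigr)\Bigr)^{m}.
\]

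Next I would optimise the right-hand side over $t>0$. Set $\varphi(t)=-t\beta+\ln\bigl(1-\alpha+\alpha e^{t}\bigr)$; then $\varphi'(t)=-\beta+\frac{\alpha e^{t}}{1-\alpha+\alpha e^{t}}$, which vanishes exactly when $e^{t}=\frac{\beta(1-\alpha)}{\alpha(1-\beta)}$. Because $\beta>\alpha$ this critical value of $e^{t}$ exceeds $1$, so the critical point $t^{*}$ lies in $(0,\infty)$, and $\varphi$ is convex (its second derivative is nonnegative), so $t^{*}$ is the global minimiser. Substituting back gives $\varphi(t^{*})=-\bigl(\beta\ln\frac{\beta}{\alpha}+(1-\beta)\ln\frac{1-\beta}{1-\alpha}\bigr)=-D(\beta||\alpha)$, and therefore $\PP(X>\beta m)\le e^{-D(\beta||\alpha)m}$. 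The endpoint $\beta=1$ is handled by letting $t\to\infty$, which recovers the exact identity $\PP(X=m)=\alpha^{m}=e^{-D(1||\alpha)m}$, and the case $\alpha=0$ is trivial.

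For the entropy estimate $D(\beta||\alpha)\ge 2(\beta-\alpha)^{2}$, fix $\alpha$ and put $g(\beta)=D(\beta||\alpha)-2(\beta-\alpha)^{2}$ on $[\alpha,1]$. One checks $g(\alpha)=0$ and $g'(\alpha)=0$, while
\[
g''(\beta)=\frac{1}{\beta(1-\beta)}-4\ge 0
\]
since $\beta(1-\beta)\le\tfrac14$. Thus $g$ is convex with a stationary point at $\beta=\alpha$, so $g\ge 0$ throughout; this is exactly Pinsker's inequality for two-point distributions. I do not expect a genuine obstacle: this is a textbook estimate and the only mildly delicate points are checking $t^{*}>0$ (which is where $\beta>\alpha$ enters) and the degenerate endpoints, both routine. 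If one wished to bypass the explicit optimisation, an alternative is to invoke Hoeffding's lemma — each summand lies in $[0,1]$, so $\EE[e^{t(X-\alpha m)}]\le e^{t^{2}m/8}$ — and optimise over $t$ to obtain the (weaker but already sufficient) bound $\PP(X>\beta m)\le e^{-2(\beta-\alpha)^{2}m}$ directly; I would mention this as a remark.
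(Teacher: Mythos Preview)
Your argument is correct and is the standard exponential-moment derivation of the Chernoff--Hoeffding bound, together with the usual proof of Pinsker's inequality for Bernoulli distributions. There is nothing to compare against: the paper states this theorem as a well-known background result and gives no proof of its own, so your write-up simply supplies what the paper omits.
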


We  will apply a variant  where the condition on independence can be replaced with a more general condition.
\begin{theorem}[Linial-Luria \cite{linial2014chernoff}] \label{CH}
	Let $X_1,\ldots,X_m$ be indicator random variables, $0 < \beta < 1$ and $t, m\in \nn$ with $ 0 < t < \beta m$.
	Then
	\begin{equation*}\label{eq:main}
		\PP \left( \sum_{i=1}^m{X_i}\geq \beta r\right) \leq
		\frac{1}{\binom{\beta m}{t}}\sum_{|T|=t}{\PP\left(\bigcap_{i \in T} \{X_i = 1\} \right)}.
	\end{equation*}
	In particular, if
	$
	\PP\left(\bigcap_{i \in T} \{X_i = 1\} \right) \leq K\alpha^t$
	for every $T$ of size $t = \left(\frac{\beta-\alpha}{1-\alpha}\right)m$, where $0 < \alpha < \beta$ and $K>0$ are constants, then
	$$
	\PP \left( \sum_{i=1}^m{X_i}>\beta r\right)  \leq Ke^{-D(\beta||\alpha) m}\leq Ke^{-2(\beta-\alpha)^2m}.$$
	
	Note that the upper bound $Ke^{-2(\beta-\alpha)^2m}$ also clearly holds in the case when $0\leq \alpha<1$ and $\beta>\alpha$ (not excluding $\beta\ge 1$).
\end{theorem}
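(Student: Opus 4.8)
The plan is to read the displayed bound as a double-counting identity for the $t$-th binomial moment $\EE\binom{S}{t}$ of $S:=\sum_{i=1}^m X_i$, and the ``in particular'' clause as an estimate for a ratio of generalised binomial coefficients. For $T\subseteq[m]$ with $|T|=t$, the event $\bigcap_{i\in T}\{X_i=1\}$ is precisely the event that $T$ is contained in the random support $\{i:X_i=1\}$, which has size $S$; hence the number of such $T$ equals $\binom{S}{t}$, and linearity of expectation gives $\sum_{|T|=t}\PP\bigl(\bigcap_{i\in T}\{X_i=1\}\bigr)=\EE\binom{S}{t}$. On the event $\{S\ge\beta m\}$ we have $\binom{S}{t}\ge\binom{\beta m}{t}$, since $x\mapsto\binom{x}{t}=x(x-1)\cdots(x-t+1)/t!$ is nondecreasing on $[t-1,\infty)$ and $S\ge\beta m>t$ (here $\binom{\beta m}{t}$ is read as this polynomial evaluated at $\beta m$, which is positive because $t<\beta m$). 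Taking expectations, $\EE\binom{S}{t}\ge\binom{\beta m}{t}\,\PP(S\ge\beta m)$, which is the first inequality after rearranging.

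For the second part, substituting the hypothesis $\PP(\bigcap_{i\in T}\{X_i=1\})\le K\alpha^t$ into the first inequality yields $\PP(S\ge\beta m)\le K\alpha^t\binom{m}{t}\big/\binom{\beta m}{t}$, so it suffices to show
\[
\frac{\binom{m}{t}}{\binom{\beta m}{t}}\,\alpha^t\ \le\ e^{-D(\beta\|\alpha)m}\qquad\text{when}\qquad t=\tfrac{\beta-\alpha}{1-\alpha}\,m .
\]
The role of this choice of $t$ is the identity $\beta m-t=\alpha(m-t)$ (equivalently $1-\tfrac tm=\tfrac{1-\beta}{1-\alpha}$ and $\beta-\tfrac tm=\tfrac{\alpha(1-\beta)}{1-\alpha}$), which is exactly what makes the relative entropy appear. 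Writing $\binom{m}{t}\big/\binom{\beta m}{t}=\prod_{i=0}^{t-1}\tfrac{m-i}{\beta m-i}$ and taking logarithms, the function $x\mapsto\tfrac{m-x}{\beta m-x}$ is increasing (its derivative has numerator $m(1-\beta)>0$), so $\sum_{i=0}^{t-1}\log\tfrac{m-i}{\beta m-i}\le\int_0^t\log\tfrac{m-x}{\beta m-x}\,dx$. Evaluating $\int_0^t\log(m-x)\,dx-\int_0^t\log(\beta m-x)\,dx$ in closed form, adding $t\log\alpha$, and simplifying with the identity above, one obtains exactly $-m\bigl(\beta\log\tfrac\beta\alpha+(1-\beta)\log\tfrac{1-\beta}{1-\alpha}\bigr)=-mD(\beta\|\alpha)$. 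Combined with $D(\beta\|\alpha)\ge 2(\beta-\alpha)^2$ from \autoref{cor:chern_binomial}, this gives both displayed bounds.

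For the final remark: if $\beta>1$ then $\{S\ge\beta m\}=\varnothing$ because $S\le m$, so there is nothing to prove; if $\beta=1$ then $t=m$, the set $T=[m]$ is the only one of size $t$, and $\PP(S\ge m)=\PP\bigl(\bigcap_{i=1}^m\{X_i=1\}\bigr)\le K\alpha^m=Ke^{m\log\alpha}\le Ke^{-2(1-\alpha)^2m}$, where the last step uses $-\log\alpha=D(1\|\alpha)\ge 2(1-\alpha)^2$, again by \autoref{cor:chern_binomial}.

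The single delicate point is the second step: turning the product of $t$ binomial-coefficient factors into the precise exponent $D(\beta\|\alpha)m$ — one must pick the right integral comparison and carry out the closed-form evaluation — and, if one insists on full rigour, handling the case where $\tfrac{\beta-\alpha}{1-\alpha}m$ is not an integer by replacing $t$ with a nearby integer and checking that the monotonicity still points in the favourable direction, at the cost of a negligible factor (or one simply invokes \cite{linial2014chernoff}). Everything else is a one-line double count together with Pinsker's inequality.
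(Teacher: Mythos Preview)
The paper does not give its own proof of this statement: \autoref{CH} is quoted from \cite{linial2014chernoff} without argument, so there is nothing to compare against. Your proof is the standard one and is correct. The first part is exactly the factorial-moment Markov inequality $\PP(S\ge\beta m)\le\EE\binom{S}{t}/\binom{\beta m}{t}$, and your integral computation for the second part checks out line by line (the identity $\beta m-t=\alpha(m-t)$ does all the work, and the constant terms and $\log m$ terms cancel as you say). The integrality of $t$ is assumed in the statement itself (and in the paper's application, condition \eqref{eq:t}), so your hedge about rounding is unnecessary here. One small gap in the final remark: you cover $\beta\ge1$ but not the case $\alpha=0$ with $\beta<1$; this is equally immediate, since then the hypothesis forces $\EE\binom{S}{t}=0$ and hence $\PP(S\ge\beta m)=0$.
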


\paragraph{Number of large secants}
A well-known consequence of the famous Szemerédi--Trotter theorem claims that the number of lines with at least $k$ points in a planar $n$-set of points is at most $O(n^2/k^3+n/k)$ \cite{szemeredi1983extremal, pach1997graphs}.
For the set of points of an $n\times n$ grid, Erdős proved an asymptotical result (see \cite[Remark~4.2]{pach1997graphs}):
%https://link.springer.com/content/pdf/10.1007/BF01215922.pdf

\begin{theorem}[Erdős]\label{lem:richlines} Let  $k=k(n)$ be a function such that $\frac{n-k}{n}=\Theta(1)$. Then the number of secants of size  at least $k$ in $\cG\leteq [1,n]^2$ is   asymptotically $\frac{6}{\pi^2}\frac{n^4}{k^3}$.
	
	In particular, there is a universal constant $L$  such that $\cL_{>\kappa}(\cG)\leq L\frac{n^4}{\kappa^3}$ for any $\kappa>0$ and $n\in\N$. (We may take e.g. $L=1$ for large enough $n$.)
\end{theorem}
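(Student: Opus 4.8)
This is a classical estimate, usually attributed to Erd\H{o}s; here is the structure of a proof.

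\emph{The asymptotic formula.} Classify the lines of $\cL(\cG)$ by their primitive direction $(a,b)$ with $a\ge 1$, $b\in\zz\setminus\{0\}$, $\gcd(a,|b|)=1$. Consecutive grid points of such a line differ by $(a,b)$, so a line carrying at least $t$ points satisfies $(t-1)\max(a,|b|)\le n-1$; hence only directions of modulus $\le\frac{n-1}{t-1}$ occur, and reflecting in a vertical line pairs $(a,b)$ with $(a,-b)$, so it suffices to count lines with $b\ge 1$ and double. For fixed $a,b\ge 1$, each line in direction $(a,b)$ meeting $\cG$ has a unique first point $p$ (the one with $p-(a,b)\notin\cG$), which satisfies $p_x\le a$ or $p_y\le b$, and the line carries at least $t$ points exactly when $p_x\le n-(t-1)a$ and $p_y\le n-(t-1)b$. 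For $\max(a,b)\le n/t$ a direct inclusion--exclusion count of such $p$ gives the exact value
\[
N_{a,b}(t)=(a+b)\,n-(2t-1)\,ab .
\]
Summing over primitive $(a,b)$ with $\max(a,b)\le n/t$ is a Riemann sum which, after inserting the density $\prod_p(1-p^{-2})=6/\pi^2$ of coprime pairs, converges to
\[
\frac{6}{\pi^2}\int_0^{n/t}\!\!\int_0^{n/t}\bigl((a+b)n-2t\,ab\bigr)\,da\,db=\frac{6}{\pi^2}\cdot\frac{n^4}{2t^3};
\]
doubling for the two slope signs (the $2n$ axis-parallel lines contribute only $O(n)=o(n^4/t^3)$ when $\frac{n-t}{n}=\Theta(1)$) and taking $t$ of order $k$ yields the stated $\bigl(1+o(1)\bigr)\frac{6}{\pi^2}\frac{n^4}{k^3}$.

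\emph{The uniform bound.} For the displayed inequality it is enough to obtain $|\cL_{>\kappa}(\cG)|\le L\,n^4/\kappa^3$ with \emph{some} absolute constant $L$, and this follows at once from Szemer\'edi--Trotter applied to the $n^2$ points of $\cG$: the number of lines through at least $\kappa$ of them is $O(n^4/\kappa^3+n^2/\kappa)$, and since any line meets $\cG$ in at most $n$ points we have $\cL_{>\kappa}(\cG)=\emptyset$ for $\kappa\ge n$, so for $\kappa<n$ the term $n^2/\kappa$ is absorbed into $n^4/\kappa^3$. (Equivalently, one may replace the inclusion--exclusion step above by the crude bound $N_{a,b}(t)\le(a+b)n$ and sum over $\max(a,b)\le n/t$.) To reach the explicit value $L=1$ for large $n$, note that the refined count has leading constant $6/\pi^2<1$ with the error and boundary-clipping terms of strictly smaller order, except when $\kappa$ lies within a fixed constant factor of $n$; in that residual range $\cL_{>\kappa}(\cG)$ consists only of $O(n)$ near-diagonal lines, which is well below $n^4/\kappa^3$.

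\emph{Main difficulty.} The only technical subtlety is the passage from the double sum over directions to the integral, combined with the coprimality weighting: the condition $\gcd(a,|b|)=1$ is removed by M\"obius inversion, and one must bound the error introduced by the sieve and by the thin boundary region $\max(a,|b|)\approx n/\kappa$, where a line is clipped by $\cG$ and the clean formula for $N_{a,b}(t)$ acquires corrections. These contributions are harmless for the $O$-bound but must be tracked carefully to pin down the sharp constant $6/\pi^2$.
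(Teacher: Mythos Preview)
The paper does not give its own proof of this statement: it is quoted as a classical result of Erd\H{o}s with a reference to \cite{pach1997graphs}, and only the ``In particular'' clause is used later. So there is no paper proof to compare against; your proposal is supplying one where none is present.

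Your sketch is essentially the standard argument and is sound. Classifying generic secants by their primitive direction $(a,b)$, identifying each line with its first grid point, and obtaining the exact count $N_{a,b}(t)=(a+b)n-(2t-1)ab$ (for $\max(a,b)\le n/t$) is correct; the conversion of the double sum over coprime $(a,b)$ into $\tfrac{6}{\pi^2}$ times a Riemann integral, with the boundary-region and M\"obius error terms of lower order, is the usual route to the constant $6/\pi^2$, and your integral indeed evaluates to $\tfrac{n^4}{2t^3}$ so doubling gives the stated asymptotic. For the uniform bound, invoking Szemer\'edi--Trotter on the $n^2$ grid points and absorbing the $n^2/\kappa$ term into $n^4/\kappa^3$ (using $\kappa\le n$) is exactly the argument the paper itself alludes to in the sentence preceding the theorem, and your alternative crude bound $N_{a,b}(t)\le (a+b)n$ would also work. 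The only places to be careful, which you flag yourself, are the clipping correction when $\max(a,b)$ is between $n/t$ and $(n-1)/(t-1)$ and the replacement of $2t-1$ by $2t$ in the integrand --- both harmless for the asymptotic but would need a line of justification in a full write-up.
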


\paragraph{Counting regular subgraphs that contain a given matching}
In \cite{isaev2018complex}, Isaev and McKay   presented a series of enumeration results based on the calculation of corresponding high-dimensional complex integrals. An illustrative example is the following. Let $N_H(\textbf{d})$, resp. $\tilde{N}_H (\textbf{d})$  denote the number of graphs on
$n$ vertices, resp. number of bipartite graphs on $n_1+(n-n_1)$ vertices which have vertex degrees $\textbf{d} = (d_1, \ldots, d_n)$, include $H^+$ as a subgraph, and are edge-disjoint from another graph $H^-$, where $H$ denotes the pair $(H^+, H^-)$ of fixed edge-disjoint graphs on the $n$-vertex underlying set.
Then, following the earlier work of  Barvinok, Canfield, Gao, Greenhill, Hartigan,
Isaev, McKay, Wang, Wormald (see \cite{greenhill2009random, liebenau2023asymptotic,  mckay2010subgraphs} and the references therein), they expressed  $N_H (\textbf{d})$ and  $\tilde{N}_H (\mathbf{d})$
as a complex integral and gave their order of magnitude, provided that some moderate assumptions hold on $H$ and on the degree sequence $\textbf{d}$.  We state here a special case of their result on bipartite graphs, when $H^+$ is a matching and $H^-$ is an empty graph.

\begin{theorem}[Special case of {\cite[Theorem~5.8]{isaev2018complex}}]\label{mcKay}
	There is an absolute constant $K\ge 1$ satisfying the following. 
	Let $0\leq r\leq m$ be integers, 
	$V_1$ and $V_2$ be disjoint sets of cardinality $m$,  
	$M$ be a fixed (not necessarily perfect) matching of the complete bipartite graph $\Gamma$ on the vertex set $V_1 \cup V_2$. 
	If $G\subseteq \Gamma$ is a uniform random $r$-regular bipartite graph, then 
	\[ \PP(M\subseteq E(G)) \leq K\cdot (r/m)^{|M|}.\]
\end{theorem}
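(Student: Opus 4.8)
The plan is to read this off from the sharp asymptotic enumeration of \cite[Theorem~5.8]{isaev2018complex}. Put $s:=|M|$, write $\mathbf d=(r,\dots,r)$ for the constant degree sequence on $V_1\cup V_2$, and — in the notation set up just before \autoref{mcKay} — let $\tilde N_{(M,\emptyset)}(\mathbf d)$ be the number of $r$-regular bipartite graphs on $V_1\cup V_2$ containing $M$ and $\tilde N_{(\emptyset,\emptyset)}(\mathbf d)$ the total number of such graphs, so that $\PP(M\subseteq E(G))=\tilde N_{(M,\emptyset)}(\mathbf d)\big/\tilde N_{(\emptyset,\emptyset)}(\mathbf d)$. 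Applying \cite[Theorem~5.8]{isaev2018complex} with $H^+=M$ and $H^-=\emptyset$ expresses both enumerators via the same Gaussian-type integral; since $M$ is a \emph{matching} — its edges being pairwise non-adjacent — the correction the formula attaches to $H^+$ amounts only to a factor $1+o(1)$, so the quotient equals the independence value $\prod_{(u,v)\in M}\frac{d_ud_v}{rm}=(r/m)^{s}$ up to $1+o(1)$, uniformly in $s$. This gives $\PP(M\subseteq E(G))\le(1+o(1))(r/m)^{s}$ whenever $(r,m)$ lies in the (broad) range where the hypotheses of that theorem hold.

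To finish, I would upgrade this to an absolute bound valid for all $0\le r\le m$ and all $m$. The extremes $r=0$ (then $G$ is empty) and $r=m$ (then $G=\Gamma$) are immediate. When $(r,m)$ lies in the range of \cite[Theorem~5.8]{isaev2018complex}, the bound holds with $K=1+o(1)$, and enlarging $K$ absorbs the $o(1)$ together with the finitely many exceptional small $m$ in that range. The only $(r,m)$ left uncovered are those with $\min(r,m-r)$ small, i.e.\ where $G$ or its complement $\Gamma\setminus G$ is a sparse regular bipartite graph; there one invokes instead the classical sparse asymptotic enumeration of regular bipartite graphs (applied to $G$, or, when $r$ is close to $m$, to $\Gamma\setminus G$, using that $\PP(M\subseteq E(G))=\PP(\Gamma\setminus G\text{ avoids }M)$), and again the configuration-model heuristic $(r/m)^{s}$ is accurate up to a bounded factor because $M$ is a matching. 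Taking $K$ to be the maximum of the constants arising in these cases completes the proof.

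The only genuinely delicate point is that the constant $K$ in \autoref{mcKay} is \emph{absolute}, in particular independent of $s=|M|$, even though $M$ may have as many as $m$ edges. An elementary approach — estimating the conditional probabilities $\PP\bigl(e_i\in E(G)\mid e_1,\dots,e_{i-1}\in E(G)\bigr)$ one at a time by switchings — incurs a multiplicative error at each of the $s$ steps and so produces a bound of the shape $(r/m)^{s}\exp(\Theta(s^{2}/m))$, whose correction blows up once $s$ has order $m$ (for instance when $M$ is nearly a perfect matching). It is precisely the strength of \cite[Theorem~5.8]{isaev2018complex} — sharp enumeration even when the forced subgraph has linearly many edges — that yields the $s$-uniform bound, and for a matching the formula it supplies carries no accumulated correction at all. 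Verifying this, and covering the out-of-range pairs $(r,m)$ as above, is where the work lies; the rest is a routine specialisation.
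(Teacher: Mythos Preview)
Your approach and the paper's are the same at heart: both invoke \cite[Theorem~5.8]{isaev2018complex} with $H^+=M$, $H^-=\emptyset$ and constant degree sequence, and both recognise that the crucial point is uniformity of the constant in $|M|$. The execution differs. The paper makes a \emph{single} application with the explicit choices $n=2m$, $\delta=1/m$, $\epsilon=1/2$, $c_1=c_2=1$, $\tilde\lambda_{j,k}=r/m$, and verifies directly that the Isaev--McKay error term satisfies $|\tilde K'|\leq \exp(\tilde c\, S_2/n+\tilde c(1+s_{\max}^3)n^{\epsilon-1/2})-1\leq \exp(3\tilde c)-1$, using $S_2=2|M|\leq n$ and $s_{\max}=1$; thus $K=\exp(3\tilde c)$ is obtained outright, for every $0<r<m$ and every $m$, with no asymptotic step. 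Your proposed split into a ``broad range'' plus sparse/cosparse regimes (to be handled by separate enumeration results), followed by an upgrade from $1+o(1)$ to an absolute constant, is therefore unnecessary: by taking $\delta=1/m$ the hypotheses of \cite[Theorem~5.8]{isaev2018complex} already cover all $1\leq r\leq m-1$. Your diagnosis that a naive edge-by-edge switching argument would accumulate an $\exp(\Theta(s^2/m))$ error is correct and is exactly why the full strength of Isaev--McKay is needed; in the paper this is absorbed by the bound $S_2/n\leq 1$.
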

\begin{proof}
	The statement is clear for $r\in\{0,m\}$ or $|M|=0$.
	Otherwise, it is a direct consequence of \cite[Theorem~5.8]{isaev2018complex} with the following setup: 
	$n=2m$, 
	$V_1=\{1,\dots,m\}$ and $V_2=\{m+1,\dots,2m\}$, 
	$H=(M,\emptyset)$.
	$\mathbf{d}=(r,r,\dots,r)$, 
	$\delta=1/m$, 
	$\epsilon=1/2$, 
	$c_1=c_2=1$, 
	$\tilde\lambda_{j,k}=r/m$ if $j\in V_1$, $k\in V_2$ or vice versa (e.g. by taking $\beta_j=\frac{1}{2} \ln\frac{r}{m-r}$) and $\tilde\lambda_{j,k}=0$ otherwise.
	Now $s_j\in\{0,1\}$,  
	$s_{\max}=1$, 
	$S_2=2|M|\leq n$. 
	We have $|\tilde K'|\leq \exp(\tilde c S_2/n+\tilde c (1+s_{\max}^3)n^{\epsilon-1/2})-1 
	\leq \exp(3\tilde c )-1$, which is an absolute constant because $\tilde c=\tilde c(\delta,\epsilon, c_1,c_2)$ is.
	The statement follows from $\PP(M\subseteq E(G))=\tilde P_H(\mathbf{d})=(1+\tilde K')\prod_{jk\in M}\tilde \lambda_{jk}=(1+\tilde K') (r/m)^{e(M)}$ 
	by taking $K=\exp(3\tilde c)$.
\end{proof}

\section{Proof of \autoref{main}}

We show the existence of a suitable no-$(k+1)$-in-line set $S$ of maximal size in the grid $\cG\leteq [1,n]^2$ using a probabilistic construction (\autoref{sec:4|n}). To keep the idea transparent, we first prove that the construction produces the desired no-$(k+1)$-in-line set with positive probability only in the case $4\mid n$ and $10\mid k$. 
The $S$ constructed has some reserve on the generic secants. 
This enables us to adjust $S$ slightly by deleting a small $r$-factor from $S$, or adding new rows and columns to $\cG$ while keeping the property that the resulting no-$(k+1)$-in-line set has maximal size. 
In this way, we can handle the cases where $n$ and $k$ do not necessarily satisfy the divisibility conditions (\autoref{sec:generalN}). 
Since the probabilistic construction works only for $k\leq \frac{5}{6}n$, we handle the remaining cases of larger values of $k$ with a short  explicit construction (\autoref{sec:nagyk}).

\subsection{Case \texorpdfstring{$k\ge \frac{2}{3}n$}{k>=2n/3}: explicit construction}\label{sec:nagyk}

Here we solve the problem when $n-k\le n/3$, by showing a point set of size $(n-k)n$ which intersects every secant $\ell$ of the grid $\cG=[1,n]^2$ in at least $|\ell\cap \cG|-k$ points. Thus the complement will be a set without collinear $(k+1)$-sets.

\begin{prop}\label{thm:kLarge}
	Let $n\in \N$ be arbitrary. 
	If $k\in\N$ satisfies $\frac{2}{3} n\leq k\leq n$, then $f_k(n)=kn$.
\end{prop}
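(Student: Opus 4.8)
The plan is to exhibit, for each line $\ell$ of the grid $\cG=[1,n]^2$, a point set $R\subseteq\cG$ of size exactly $(n-k)n$ meeting every secant $\ell$ in at least $|\ell\cap\cG|-k$ points; then $S=\cG\setminus R$ has $|S|=n^2-(n-k)n=kn$ and $|S\cap\ell|=|\ell\cap\cG|-|R\cap\ell|\le k$ for every line, so $f_k(n)\ge kn$. The matching upper bound $f_k(n)\le kn$ is the standard pigeonhole argument: the $n$ rows are lines, each contributing at most $k$ points. Since $\frac23 n\le k\le n$, we have $0\le n-k\le \frac13 n$, so $R$ is a rather sparse set, which is what makes an explicit description feasible.

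The natural candidate is to take $R$ to be the union of $n-k$ ``diagonals'' in the torus sense: writing $r=n-k$, let
\[
R=\bigl\{(x,y)\in[1,n]^2 : y-x \equiv c \pmod n \text{ for some } c\in\{0,1,\dots,r-1\}\bigr\}.
\]
Each residue class contributes exactly $n$ points, and the classes are disjoint, so $|R|=rn=(n-k)n$ as required. Every row and every column meets $R$ in exactly $r=n-k$ points, which already handles the horizontal and vertical lines with room to spare. The main work is to check the generic secants: a line with primitive direction $(v_x,v_y)$, $v_x\ge 1$, $\gcd(v_x,v_y)=1$, consecutive grid points $p, p+(v_x,v_y), p+2(v_x,v_y),\dots$ have $y-x$ values forming an arithmetic progression with common difference $d=v_y-v_x$ modulo $n$. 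If $t:=|\ell\cap\cG|$, I must show the number of these $t$ consecutive terms lying in $\{0,\dots,r-1\}\pmod n$ is at least $t-k=t-(n-r)$, equivalently that at most $n-r$ of them avoid the target window. When $d\equiv 0\pmod n$ the whole line lies inside or outside a single class, and since a generic line with $v_x=v_y$ forces $v_x=v_y=1$, it has at most $n$ points and either all of them or none are in $R$ — the ``none'' case needs $t\le k$, which holds because such a line has at most $n-1< n$ points off the main anti-diagonal... here one must be slightly careful, but $\frac23 n\le k$ gives enough slack for short lines; for long diagonals one picks $c=0$ to be among the chosen classes. When $d\not\equiv0$, the $t$ terms $y-x \pmod n$ are $t$ terms of an arithmetic progression with nonzero common difference $d$, hence hit any residue at most $\lceil t/(\text{period})\rceil$ times and, crucially, are "spread out": a window of $r$ consecutive residues is hit in a controlled number of the $t$ steps.

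The hard part will be the uniform bound on generic secants, i.e.\ proving that for every primitive $(v_x,v_y)$ and every starting point, at most $n-r=k$ of the $t$ consecutive values of $y-x\bmod n$ fall outside $\{0,\dots,r-1\}$. I expect to handle this by a gap/counting argument: partition the $t$ consecutive points of $\ell\cap\cG$ according to the residue of $y-x$ modulo $\operatorname{lcm}$-type quantity, observe that consecutive points differ by the fixed step $d$, and bound the length of any run of consecutive points whose $(y-x)$-residue lies outside the window of size $r$ by roughly $(n-r)\cdot\frac{n}{\gcd(d,n)}/\dots$; then sum over runs. The slack $k\ge\frac23 n$ (equivalently $r\le \frac13 n$) should make these estimates go through with constant factors to spare, and the threshold $\frac23$ is presumably exactly what is needed for the worst-case short secant (or a secant whose step $d$ is coprime to $n$, where the values cycle through all residues and the window of size $r$ is hit a $\tfrac{r}{n}$-fraction of the time, i.e.\ $t\cdot r/n$ times, leaving $t(1-r/n)=t\cdot k/n$ outside — which is $\le k$ precisely when $t\le n$). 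This last observation suggests the cleanest route: show directly that among any $t\le n$ consecutive terms of any arithmetic progression mod $n$, at most $t(n-r)/n + O(1)\le k$ lie outside a fixed interval of length $r$, and separately verify the finitely many small/degenerate cases using $k\ge\frac23 n$.
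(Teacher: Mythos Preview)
Your construction has a genuine gap: the diagonal-stripe set $R=\{(x,y):y-x\bmod n\in\{0,\dots,r-1\}\}$ (with $r=n-k$) fails on the slope $+1$ lines just \emph{below} the main diagonal. Take $\ell_c:\;y=x-c$ with $0<c<r$. This line has $|\ell_c\cap\cG|=n-c>n-r=k$ grid points, so it must be hit by $R$ at least $n-c-k=r-c>0$ times. But on $\ell_c$ the value $y-x$ is constantly $-c\equiv n-c\pmod n$, and $n-c\ge n-r+1=k+1>r-1$, so $n-c\notin\{0,\dots,r-1\}$ and hence $R\cap\ell_c=\emptyset$. Concretely, for $n=9$, $k=6$, $r=3$, the line $y=x-1$ carries $8>k$ grid points and misses $R$ entirely. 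No symmetric choice of the residue window of size $r$ can fix this: there are $2r-1$ slope $+1$ lines with more than $k$ points (namely $|c|\le r-1$), each lying in a distinct residue class, so a window of only $r$ classes cannot cover them all. Your hand-wave ``the `none' case needs $t\le k$, which holds because such a line has at most $n-1$ points'' is exactly where the argument breaks: $n-1>k$ whenever $r\ge2$.

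You also misidentified the hard part. Any generic secant of modulus $M\ge2$ meets $\cG$ in at most $\lfloor(n-1)/M\rfloor+1\le\lceil n/2\rceil\le k$ points (using $k\ge\frac23 n$), so those lines are automatically fine and your arithmetic-progression analysis is unnecessary. The only nontrivial lines are horizontal, vertical, and slope $\pm1$; your $R$ handles rows and columns and (accidentally) the slope $-1$ lines, but not slope $+1$. The paper's proof exploits precisely this reduction: it places one $(n-k)\times(n-k)$ solid square to cover all long slope $+1$ lines, a second disjoint $(n-k)\times(n-k)$ square to cover all long slope $-1$ lines, and then completes the remaining rows and columns to an $(n-k)$-factor via Hall's theorem. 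The two-square idea is what your toroidal diagonals are missing.
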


\begin{proof}
	Observe that it is enough to consider the lines which have at least $\frac23n$ grid points. These include the horizontal and vertical lines, and the lines of slope $\pm 1$ that are close enough to the two main diagonals, i.e., the ones between the dashed lines in \autoref{fig:kLarge}. 
	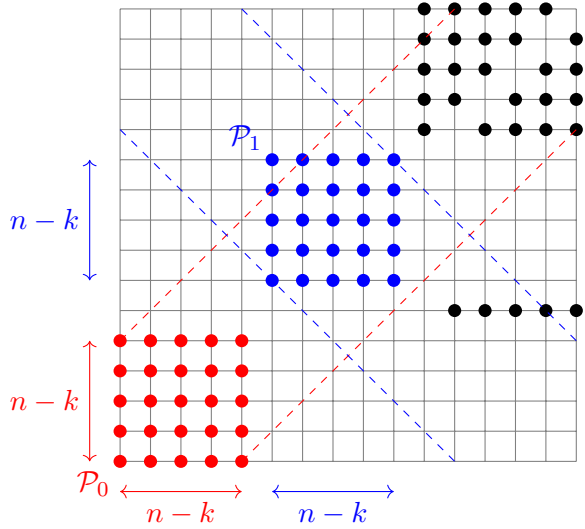
\begin{figure}[ht]
		\centering
		
		\begin{tikzpicture}[scale=0.4]
			
			\def\myradius{1.3ex}
			
			\draw[opacity=0.5,gray] (1,1) grid (16,16);
			
			\foreach \x in {0,...,4} {
				\foreach \y in {0,...,4} {
					\fill[red] (1+\x,1+\y) circle (\myradius); 
					\fill[blue] (6+\x,7+\y) circle (\myradius); 
				}
			}
			
			\foreach \x in {0,...,4} {
				\foreach \y in {0,...,\x} {
					\fill[black] (15-\x,16-\y) circle (\myradius); 
					
					\ifnum \y>0
					\fill[black] (12+\x,11+\y) circle (\myradius);
					\else 
					\fill[black] (12+\x,6+\y) circle (\myradius);
					\fi        
				}
			}
			
			\node[red,anchor=north east]  at (1,1) {$\cP_0$};  
			\draw[red,<->] (1,0) -- (5,0) node[midway,anchor=north] {$n-k$};
			\draw[red,<->] (0,1) -- (0,5) node[midway,anchor=east] {$n-k$};
			%    \draw[red,dashed] (1,1) -- (16,16);
			\draw[red,dashed] (1,5) -- (12,16);
			\draw[red,dashed] (5,1) -- (16,12);
			
			\node[blue,anchor=south east] at (6,11) {$\cP_1$}; 
			\draw[blue,<->] (6,0) -- (10,0) node[midway,anchor=north] {$n-k$};
			\draw[blue,<->] (0,7) -- (0,11) node[midway,anchor=east] {$n-k$};
			%    \draw[blue,dashed] (1,16) -- (16,1);
			\draw[blue,dashed] (1,12) -- (12,1);
			\draw[blue,dashed] (5,16) -- (16,5);
			
		\end{tikzpicture}

		\caption{The complement of a suitable construction 
			for $n=16, k=11$. }
		\label{fig:kLarge}
	\end{figure}
	Let us take the point set $\cP_2=\cP_0\cup \cP_1$, where $\cP_0$ and $\cP_1$ are two $(n-k)\times(n-k)$ square grids defined by 
	\begin{align*}
		\cP_0 &=[1,n-k]\times [1,n-k], \\
		\cP_1 &= [n-k+1,2n-2k]\times [2k-n+1,k]
	\end{align*}
	as indicated in \autoref{fig:kLarge}.

	It is easy to see that $\cP_0$ intersects every line $\ell[y=x\pm c]$ exactly $n-k-c$ times for $0\leq c\leq n-k$,
	similarly $\cP_1$ intersects every line $\ell[y=-x+n+1\pm c]$ exactly $n-k-c$ times for $0\leq c\leq n-k$.
	Moreover, every horizontal or vertical line contains either $0$ or $n-k$ points of $\cP_2$. In order to obtain a point set $\cP_3\supseteq \cP_2$ where every vertical and horizontal line contains exactly $n-k$ points, we add a suitable point set to $\cP_2$, which corresponds to an $(n-k)$-regular bipartite graph on $(n-2(n-k)) +(n-2(n-k))  $ vertices. Such a graph exists due to Hall's Marriage theorem.
	The complement $\cG\setminus \cP_3$ will thus be a no-$(k+1)$-in-line set.
\end{proof}

\subsection{Case \texorpdfstring{$k\leq \frac{5}{6}n$ with $4\mid n$ and $10\mid k$}{k<=5n/6 with 4|n and 10|k}: bi-uniform random construction}\label{sec:4|n}

We use a probabilistic method to construct the set $S\subseteq \cG=[1,n]^2$ when $n$ is a multiple of $4$, $k$ is a multiple of $10$, and $C\sqrt{n\log{n}}\le k\leq \frac{5}{6} n$ holds. 

Let us decompose $\cG$ into $16$ subgrids of equal size $\frac n4\times \frac n4$, see \autoref{fig:subgrids1}. Denote these subgrids by $$\cG_{i,j}\leteq \left[(i-1)\frac n4+1, i\frac n4\right]\times \left[(j-1)\frac n4+1, j\frac n4\right]$$ for $1\le i,j\le 4$.
Mark the subgrid $\cG_{i,j}$ as \emph{sparse} if $i=j$ or $i+j=5$ (i.e. if it intersects any of the longest diagonals), and mark it as \emph{dense} otherwise. 
As indicated in \autoref{fig:subgrids1}, we marked eight of the $16$ subgrids as sparse and the other eight as dense.

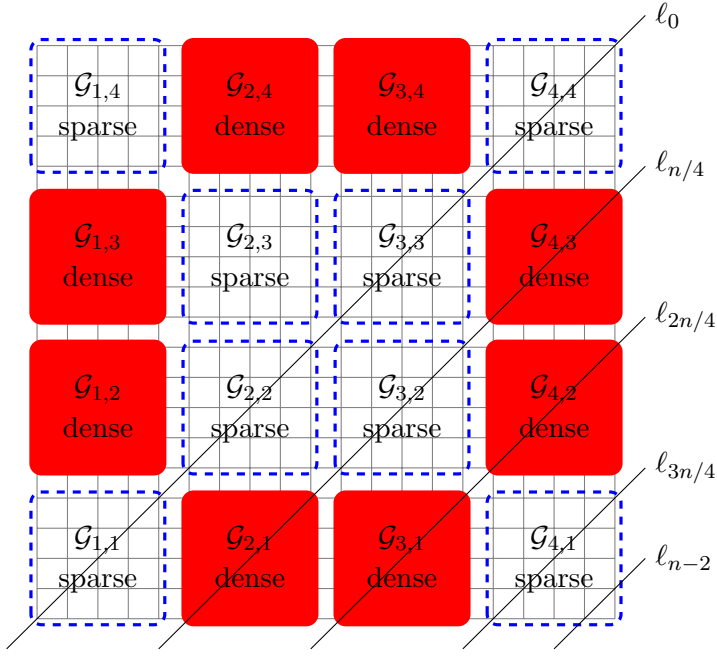
\begin{figure}[htb!]
	\centering
	
	\begin{tikzpicture}[scale=0.4]
		
		\draw[opacity=0.5,gray] (1,1) grid (20,20);
		\foreach \x in {1,...,4} {
			\foreach \y in {1,...,4} {
				\ifthenelse{\x=\y \OR \numexpr\x+\y\relax = 5}
				{\draw[draw=blue, very thick,rounded corners, dashed] 
					(-4.2+5*\x,-4.2+5*\y) rectangle ++(4.4,4.4) 
					node[pos=0.5, align=center] {\strut{}$\cG_{\x,\y}$\\\strut{}sparse};}
				{\draw[draw=red, fill=red, fill opacity=0.2,text opacity=1, very thick,rounded corners] 
					(-4.2+5*\x,-4.2+5*\y) rectangle ++(4.4,4.4)
					node[pos=0.5, align=center] {\strut{}$\cG_{\x,\y}$\\\strut{}dense};} 
			}
		}

		\draw[opacity=0.7]  (0,0) -- (21,21) node[anchor=west] {$\ell_0$};
		\draw[opacity=0.7]  (5,0) -- (21,16) node[anchor=west] {$\ell_{n/4}$};
		\draw[opacity=0.7] (10,0) -- (21,11) node[anchor=west] {$\ell_{2n/4}$};
		\draw[opacity=0.7] (15,0) -- (21, 6) node[anchor=west] {$\ell_{3n/4}$};
		\draw[opacity=0.7] (18,0) -- (21, 3) node[anchor=west] {$\ell_{n-2}$};

	\end{tikzpicture}

	\caption{The arrangement of the sparse and dense  subgrids and the critical secant lines}
	\label{fig:subgrids1}
\end{figure}

\begin{defn}[Bi-uniform random construction]\label{def:bi-unfonorm}
	In each subgrid $\cG_{i,j}$, we define the point set  $S_{i,j}\subseteq \cG_{i,j}$ as follows.
	
	Let $S_{i,j}$ be an $r_{i,j}$-factor in the subgrid $\cG_{i,j}$ chosen uniformly at random, and independently for each pair $(i,j)$, where the entries of the matrix $R\leteq ((r_{i,j}))\in [0,n/4]^{4\times 4}$ are given in \eqref{eq:defR} as follows.
	
	\begin{equation}\label{eq:defR}
		r_{i,j}\leteq 
		\begin{cases}
			2\frac{k}{10},& \text{if $\cG_{i,j}$ is sparse} \\
			3\frac{k}{10},& \text{if $\cG_{i,j}$ is dense} \\
		\end{cases}
	\end{equation}
	
	Our point set is defined as $S\leteq \bigcup\limits_{i,j=0}^3 S_{i,j}\subseteq \cG$. 
\end{defn}

The key idea is the following. 
We will show that this probabilistic construction works with positive probability if 
$|S\cap \ell|=k$ for every vertical or horizontal line $\ell$, 
and the expected value of $|S\cap \ell|$ is significantly less than $k$ for any other line $\ell$. 
Intuitively, the second condition implies that even with deviations from the expected value, with positive probability $|S\cap \ell|\leq k$ still should hold for every $\ell$. 
To extend this section to the general case (without requiring $4\mid n$ and $10\mid k$), we need to construct a no-$(k+1)$-in-line set having small positive reserve ($\geq 15$ actually).

To formalise the idea above, we introduce the following notions below for an arbitrary matrix $R\in [0,n/4]^{4\times 4}$. 
Note that the following statements hold for this general $R$, but for the purpose of the proof of \autoref{main}, one may always take the specific values given in \eqref{eq:defR}.

\begin{defn}\label{def:E}
	For $\ell\in\cL(\cG)$, let $g_{i,j}(\ell)\leteq |\cG_{i,j}\cap \ell|$.
	Let $E_R(\ell)\leteq \sum_{(i,j)} \alpha_{i,j}\cdot  g_{i,j}(\ell)$, where $\alpha_{i,j}\leteq \frac{r_{i,j}}{n/4}$, 
	and define $E(R)\leteq \max\{E_R(\ell):\ell\in\cL(\cG)\}$.
\end{defn}

Observe that the normalised parameters $\alpha_{i,j}$ express the probability that a point of the grid  $\cG_{i,j}$ belongs to $S$. 
Consequently, $E_R(\ell)$ equals the expected number of points of $S$ on $\ell$. 
Hence our key idea from above is expressed by the following notion.

\begin{defn}\label{def:feasible}
	For parameters $\delta<1$ and $k\in\N$, 
	we say that the matrix $R$ is \emph{$(k,\delta)$-feasible} if 
	$E(R)\leq \delta\cdot k$ and 
	$\sum_{i'=1}^4 r_{i',j}=\sum_{j'=1}^4 r_{i,j'}=k$ for every $i,j\in[1,4]$.
\end{defn}

We verify that $R$ from \eqref{eq:defR} indeed satisfies this definition.

\begin{lemma}\label{lem:delta}
	For any $k\leq \frac{5}{6}n$ with $10\mid k$, and for $\delta=\frac{4}{5}$, the matrix $R$ from \eqref{eq:defR} is $(k,\delta)$-feasible with $E(R)=\delta\cdot k$.
\end{lemma}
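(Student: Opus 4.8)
The plan is to verify the two requirements of $(k,\delta)$-feasibility separately for the matrix $R$ given in \eqref{eq:defR} with $\delta=\tfrac45$.

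\textbf{Row and column sums.} First I would check the linear constraint $\sum_{i'} r_{i',j}=\sum_{j'} r_{i,j'}=k$. In each row (and by symmetry each column) of the $4\times 4$ array of subgrids, exactly two of the subgrids lie on a longest diagonal (one on the main diagonal $i=j$, one on the anti-diagonal $i+j=5$) and are therefore sparse, while the other two are dense. Hence every row sum and every column sum equals $2\cdot 2\tfrac{k}{10}+2\cdot 3\tfrac{k}{10}=10\cdot\tfrac{k}{10}=k$, as required. (The hypothesis $10\mid k$ is what makes all the $r_{i,j}$ integers, and $k\le \tfrac56 n$ guarantees $r_{i,j}\le 3\tfrac{k}{10}\le \tfrac{n}{4}$, so the entries genuinely lie in $[0,n/4]$ and the random $r_{i,j}$-factors exist.)

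\textbf{Bounding $E(R)$.} For the second requirement I must show $E_R(\ell)=\sum_{(i,j)}\alpha_{i,j}g_{i,j}(\ell)\le \delta k=\tfrac45 k$ for every generic secant $\ell$, and exhibit an $\ell$ attaining equality. Here $\alpha_{i,j}=r_{i,j}/(n/4)$ takes only the two values $\tfrac{2k}{10}\big/\tfrac n4=\tfrac{4k}{5n}$ (sparse) and $\tfrac{3k}{10}\big/\tfrac n4=\tfrac{6k}{5n}$ (dense). Since $\sum_{(i,j)}g_{i,j}(\ell)=|\ell\cap\cG|\le $ (number of columns) $=n$ for a generic line, the worst case is when $\ell$ meets the dense subgrids as much as possible and the sparse ones as little as possible. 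The key geometric observation is that a generic line, having slope neither $0$ nor $\infty$, can pass through at most $4+4-1=7$ of the $16$ subgrids, and more to the point it cannot avoid all $8$ sparse subgrids: any line crossing the grid from one side to the other must cross the "diagonal band" of sparse subgrids. I would make this precise by arguing that a generic secant passes through at least one sparse subgrid in a way that costs at least $n/4$ of its potential grid points to be weighted by the smaller coefficient rather than the larger one; writing $E_R(\ell)=\tfrac{6k}{5n}|\ell\cap\cG| - \tfrac{2k}{5n}\cdot(\text{points of }\ell\text{ in sparse subgrids})$ and using $|\ell\cap\cG|\le n$ together with a lower bound of $n/4$... wait, that gives $\tfrac{6k}{5}-\tfrac{2k}{5}\cdot\tfrac14=\tfrac{11k}{10}$, which is too weak, so the bound on sparse points must in fact be $\tfrac{n}{2}$ (two sparse subgrids' worth). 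I would prove this by casework on the slope sign and rough position: a line of positive slope meeting the grid must traverse at least two of the main-diagonal sparse subgrids' column-ranges, and symmetrically for negative slope with the anti-diagonal; then $E_R(\ell)\le \tfrac{6k}{5n}\cdot n-\tfrac{2k}{5n}\cdot\tfrac n2=\tfrac{6k}{5}-\tfrac{k}{5}=k$ — still not $\tfrac45 k$.

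\textbf{The main obstacle and the extremal line.} So the real content — and the step I expect to be delicate — is showing the sharper bound that the sparse subgrids capture at least $\tfrac{3n}{4}$ of the points of any generic secant, or equivalently that a generic $\ell$ can put at most $\tfrac{n}{4}$ of its points into dense subgrids; then $E_R(\ell)\le\tfrac{4k}{5n}\cdot n+\big(\tfrac{6k}{5n}-\tfrac{4k}{5n}\big)\cdot\tfrac{n}{4}=\tfrac{4k}{5}+\tfrac{k}{10}$ — I am still off, which tells me the honest computation needs the finer fact that $g_{i,j}(\ell)$ summed over dense subgrids a line can reach is genuinely small, using that consecutive subgrids a line passes through are mostly sparse near the diagonals. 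The cleanest route is: parametrise a generic secant by its intersections with the four ``quarter columns'' $\big[(i-1)\tfrac n4+1, i\tfrac n4\big]$; for each such vertical strip, the line lies in exactly one of the four subgrid rows (or transitions between two), and I track which of these are dense. Checking the possible ``staircase patterns'' of a monotone line through a $4\times4$ board and maximising $\sum\alpha_{i,j}g_{i,j}$ subject to $\sum_j g_{i,j}(\ell)\le n/4$ per strip, the maximum is achieved by the lines $\ell_0,\ell_{n/4},\ell_{2n/4},\ell_{3n/4}$ drawn in \autoref{fig:subgrids1} — lines of slope $1$ parallel to the main diagonal and suitably offset — which run through the sparse diagonal subgrids and exactly two dense ones, yielding $E_R(\ell)=\tfrac45 k$ exactly. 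I would present this as a short finite optimisation over the (few) combinatorial types of monotone lattice paths on the $4\times 4$ board, observe that by the four-fold dihedral symmetry of the sparse/dense pattern it suffices to treat positive-slope lines, identify the optimum with the claimed value $\tfrac45 k$, and exhibit $\ell_0$ (the main diagonal itself, with $g_{i,i}(\ell_0)=n/4$ for each $i$ and all other $g_{i,j}=0$, giving $E_{R}(\ell_0)=4\cdot\tfrac{4k}{5n}\cdot\tfrac n4=\tfrac{4k}{5}$) as the witness for equality $E(R)=\delta k$.
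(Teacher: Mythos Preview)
Your treatment of the row/column sums and the witness $\ell_0$ is fine, but the actual upper bound $E_R(\ell)\le\tfrac45 k$ is never established. Your own narrative shows this: the rewriting $E_R(\ell)=\tfrac{6k}{5n}|\ell\cap\cG|-\tfrac{2k}{5n}\cdot(\text{sparse points})$ leads you through bounds of $\tfrac{11k}{10}$, then $k$, then ``still off'', and the final appeal to a ``short finite optimisation over the combinatorial types of monotone lattice paths on the $4\times4$ board'' is a plan, not an argument. As stated it is not even finite: for a line of arbitrary rational slope the number of grid points in each subgrid it visits depends continuously on both slope and intercept, so there is no small list of ``staircase types'' to check unless you first restrict the slope.

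The idea you are missing is the split by \emph{modulus}. If the primitive direction vector of $\ell$ has $\max(|v_x|,|v_y|)\ge 2$, then $|\ell\cap\cG|\le n/2$, and the crude bound $E_R(\ell)\le\max_{i,j}\alpha_{i,j}\cdot|\ell\cap\cG|\le\tfrac{6k}{5n}\cdot\tfrac n2=\tfrac{3k}{5}<\tfrac45 k$ already suffices. This leaves only the lines of slope $\pm1$, and by symmetry only $\ell_c:y=x-c$ for $0\le c\le n-2$. For these the paper observes that $c\mapsto E_R(\ell_c)$ is \emph{piecewise linear} with breakpoints at $c=0,\tfrac n4,\tfrac{2n}4,\tfrac{3n}4,n-2$ (each $g_{i,j}(\ell_c)$ is linear between breakpoints), so the maximum is attained at one of these five values; a direct evaluation gives $\tfrac45 k,\tfrac45 k,\tfrac35 k,\tfrac15 k$ and something smaller, respectively. (Your claim that all of $\ell_0,\ell_{n/4},\ell_{2n/4},\ell_{3n/4}$ ``yield $E_R(\ell)=\tfrac45 k$ exactly'' is incorrect for the last two.) This two-line reduction replaces the open-ended case analysis you sketch.
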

\begin{proof}
	Note that $r_{i,j}\leq n/4$ shows that \eqref{eq:defR} is valid only for $k\leq \frac{5}{6}n$.
	We see from \autoref{fig:subgrids1} that $\sum_{i'=1}^4 r_{i',j}=\sum_{j'=1}^4 r_{i,j'}=2\cdot 2\frac{k}{10}+ 2\cdot 3\frac{k}{10}=k$.
	
	Next, let $\ell\in\cL(\cG)$. 
	Assume first that the modulus of $\ell$ is $1$. 
	By the symmetry of \autoref{fig:subgrids1}, we may assume that $\ell$ has slope $1$, i.e. that $\ell=\ell_c[y=x-c]$ for some $0\leq c\leq n-2$. 
	Note that by the linearity of \autoref{def:E} and the shape of the subgrids in \autoref{fig:subgrids1}, the function $c\mapsto E_R(\ell_c)$ is linear on the intervals $[0,n/4]$, $[n/4, 2n/4]$, $[2n/4, 3n/4]$, $[3n/4,n-2]$. 
	Hence \[E_R(\ell)
	\leq \max\big\{E_R(\ell_0), E_R(\ell_{n/4}), E_R(\ell_{2n/4}), E_R(\ell_{3n/4}), E_R(\ell_{n-2})\big\}
	=\delta\cdot k\]
	by the choice of \eqref{eq:defR} and $\delta=4/5$. 
	
	Finally, assume that the modulus of $\ell$ is at least $2$. Then $|\cG\cap \ell|\leq n/2$, so 
	\[
	E_R(\ell)
	\leq \max_{(i,j)}\{\alpha_{i,j}\}\cdot \sum_{(i,j)} g_{i,j}(\ell)
	\leq \max_{(i,j)}\{\alpha_{i,j}\}\cdot |\cG\cap \ell|
	\leq \frac{3}{5}k<\delta\cdot k
	\]
	as $\alpha_{i,j}\leq \frac{6 k}{5 n}$ by \eqref{eq:defR}.
\end{proof}
In fact, $\delta=\frac{4}{5}$ is the smallest value for which a $(k,\delta)$-feasible $4\times 4$ matrix $R$ exists (having at most $4$ different entries).

Our goal now is to prove that the probabilistic construction of \autoref{def:bi-unfonorm} works w.h.p. for large enough $n$ for \emph{any} feasible $R$ (actually with reserve on the generic secants), see \autoref{cor:feasibleInduceSolution} below. 
Accordingly, observe that \eqref{eq:defR} is not required on $R$ for the statements below
to hold.

We start with the following key technical statement.
\begin{lemma}\label{lem:keyExistence}
	Let $0\leq p<1$,~ $0\le\epsilon\le 1$, and $\kappa$ be positive real numbers with  $n^\epsilon \leq \kappa$.
	Assume that for every $(i,j)\in[1,4]^2$ and $\ell\in \cL(\cG)$ with  $g_{i,j}(\ell)>0$, 
	there are real numbers $\beta_{i,j}(\ell)>0$ and $\kappa_{i,j}(\ell)\geq 0$ satisfying 
	\begin{align}
		\alpha_{i,j} + \frac{D_1(n,p,\epsilon)}{\sqrt{g_{i,j}(\ell)}} 
		\leq \beta_{i,j}(\ell) 
		&< \frac{\kappa_{i,j}(\ell)}{g_{i,j}(\ell)} 
		\label{eq:AlphaBeta}
		\\
		t_{i,j}(\ell)\leteq \frac{\beta_{i,j}(\ell)-\alpha_{i,j}}{1-\alpha_{i,j}}g_{i,j}(\ell)&\in\N &\text{if $\alpha_{i,j}\neq 1$}
		\label{eq:t}
		\\ 
		\sum_{(i,j):g_{i,j}(\ell)>0} \kappa_{i,j}(\ell)&\leq \kappa
		\label{eq:kappa}
	\end{align}
	where 
	$D_1(n,p,\epsilon)\leteq \sqrt{(2-3\epsilon/2)\log n+\frac{1}{2}\log\left(7\frac{KL}{1-p}\right)}$ for the universal constants $K, L$ of $\autoref{mcKay}$ and \autoref{lem:richlines}.
	Then \[\PP\Big(\forall \ell\in\cL(\cG): |S\cap \ell|\leq \kappa\Big)\geq p.\]
\end{lemma}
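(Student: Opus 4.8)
The plan is to prove \autoref{lem:keyExistence} by a union bound over all generic secants, applying the Linial--Luria concentration inequality (\autoref{CH}) separately to each secant $\ell\in\cL(\cG)$. The overall strategy: bound $\PP(|S\cap\ell|>\kappa)$ for each fixed $\ell$, and then sum over $\ell$, splitting the family $\cL(\cG)$ according to the number of grid points on the line (using \autoref{lem:richlines} to control how many ``rich'' secants exist) so that the total is $\leq 1-p$.

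First I would fix a generic secant $\ell$. The random variable $|S\cap\ell|=\sum_{(i,j)}|S_{i,j}\cap\ell|$, and because the $r_{i,j}$-factors $S_{i,j}$ are chosen independently across $(i,j)$, I can hope to estimate each $|S_{i,j}\cap\ell|$ separately and combine. Within one subgrid $\cG_{i,j}$, write $m_{i,j}\leteq g_{i,j}(\ell)$ for the number of grid points of $\ell$ in that subgrid; these $m_{i,j}$ points are indicator variables $X_1,\dots,X_{m_{i,j}}$ recording membership in the random $r_{i,j}$-factor $S_{i,j}$. By \autoref{mcKay} (with $m=n/4$, $r=r_{i,j}$, $M$ the matching corresponding to any $t$ of these $m_{i,j}$ collinear points — note collinear grid points in a subgrid form a partial matching since no two share a row or column), we get $\PP(\bigcap_{i\in T}\{X_i=1\})\leq K(r_{i,j}/(n/4))^{t}=K\,\alpha_{i,j}^{\,t}$ for every $t$-subset $T$. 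This is exactly the hypothesis needed to invoke the ``in particular'' clause of \autoref{CH} with $\alpha=\alpha_{i,j}$, $\beta=\beta_{i,j}(\ell)$, and $t=t_{i,j}(\ell)$ (which is why \eqref{eq:t} demands $t_{i,j}(\ell)\in\N$), yielding $\PP(|S_{i,j}\cap\ell|>\beta_{i,j}(\ell)\,m_{i,j})\leq K e^{-2(\beta_{i,j}(\ell)-\alpha_{i,j})^2 m_{i,j}}$. Now by \eqref{eq:AlphaBeta}, $(\beta_{i,j}(\ell)-\alpha_{i,j})\geq D_1(n,p,\epsilon)/\sqrt{m_{i,j}}$, so the exponent is at least $2D_1(n,p,\epsilon)^2$, giving $\PP(|S_{i,j}\cap\ell|>\beta_{i,j}(\ell)m_{i,j})\leq K e^{-2D_1^2}$. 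Summing the ``good'' events: if $|S_{i,j}\cap\ell|\leq\beta_{i,j}(\ell)g_{i,j}(\ell)<\kappa_{i,j}(\ell)$ for all $(i,j)$ with $g_{i,j}(\ell)>0$ (the strict inequality from \eqref{eq:AlphaBeta}), then $|S\cap\ell|<\sum\kappa_{i,j}(\ell)\leq\kappa$ by \eqref{eq:kappa}. Hence by a union bound over the at most $16$ subgrids, $\PP(|S\cap\ell|>\kappa)\leq 16Ke^{-2D_1^2}$.

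Next I would union-bound over $\ell\in\cL(\cG)$. Plugging in $D_1^2=(2-3\epsilon/2)\log n+\tfrac12\log(7KL/(1-p))$, we get $e^{-2D_1^2}=n^{-(4-3\epsilon)}\cdot\big(\tfrac{1-p}{7KL}\big)$, so $\PP(|S\cap\ell|>\kappa)\leq \tfrac{16}{7}\cdot\tfrac{1-p}{7L}\cdot n^{-(4-3\epsilon)}$ — I should double-check the constant $16/7$ vs. the available slack; it may be cleaner to bound by fewer than $16$ terms (only subgrids actually met by $\ell$), or absorb the factor elsewhere, but some absolute constant times $(1-p) n^{-(4-3\epsilon)}/L$ is what comes out. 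To finish, I split $\cL(\cG)$ into the secants with $g(\ell)\coloneqq|\cG\cap\ell|> n^{\epsilon}$ and those with $g(\ell)\leq n^\epsilon$. For the first class, \autoref{lem:richlines} gives at most $L n^4/n^{3\epsilon}=L n^{4-3\epsilon}$ such lines, so their contribution is at most $(\text{const})\cdot(1-p)$; the constant should come out $\leq 1-p$ territory after being careful (this is the purpose of the mysterious factor $7$). For the second class, we need $g_{i,j}(\ell)>0$ implies $m_{i,j}\geq$ something, but actually for these ``poor'' lines the hypothesis $n^\epsilon\leq\kappa$ together with \eqref{eq:kappa} may force $\kappa_{i,j}(\ell)$ large relative to $g_{i,j}(\ell)$ so that $\beta_{i,j}(\ell)$ can be taken $\geq 1$ making the concentration bound trivial, OR more simply: I suspect the intended reading is that the hypotheses \eqref{eq:AlphaBeta}--\eqref{eq:kappa} are \emph{assumed} to hold for every $\ell\in\cL(\cG)$, so the poor lines are handled identically, and the point of $n^\epsilon\leq\kappa$ is only to make \autoref{lem:richlines} applicable at threshold $\kappa$ — i.e. the split is at $\kappa$, not at $n^\epsilon$: lines with $g(\ell)\leq\kappa$ automatically satisfy $|S\cap\ell|\leq g(\ell)\leq\kappa$ deterministically, and lines with $g(\ell)>\kappa\geq n^\epsilon$ number at most $Ln^4/\kappa^3\leq Ln^{4-3\epsilon}$ by \autoref{lem:richlines}, and for each the failure probability is $\lesssim (1-p)n^{-(4-3\epsilon)}$. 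Multiplying, the total failure probability is $\leq 1-p$, which gives the claim.

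The main obstacle I expect is bookkeeping the absolute constants so that the product of ``number of rich lines'' $\leq Ln^{4-3\epsilon}$ and ``per-line failure probability'' $\leq cK n^{-(4-3\epsilon)}(1-p)/(KL)$ comes out to at most $1-p$ rather than some larger multiple — this is precisely why the definition of $D_1$ carries the seemingly arbitrary constant $7$ and the factor $\tfrac12\log(7KL/(1-p))$, and one must verify that $16$ (or however many relevant subgrids) times the other fudge factors is absorbed by $7$. A secondary technical point is making sure \autoref{mcKay} applies cleanly: the collinear grid points of $\ell$ inside $\cG_{i,j}$ really do form a (partial) matching of the bipartite graph on the two coordinate-sets of $\cG_{i,j}$ of size $n/4$ each — true because a generic (non-axis-parallel) line meets each row and each column of the subgrid at most once — and that the uniform random $r_{i,j}$-factor is exactly a uniform random $r_{i,j}$-regular bipartite graph on $n/4+n/4$ vertices, so the theorem's hypothesis $0\le r\le m$ is met. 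Neither of these is deep, but both need to be stated carefully for the union bound to be rigorous.
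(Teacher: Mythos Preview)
Your proposal is correct and follows essentially the same route as the paper's proof: restrict to secants in $\cL_{>\kappa}(\cG)$ (since $|S\cap\ell|\leq|\cG\cap\ell|\leq\kappa$ is automatic otherwise), then union-bound over the subgrids that $\ell$ actually meets, apply \autoref{mcKay} plus \autoref{CH} in each, and finish with \autoref{lem:richlines}. The one piece of bookkeeping you flagged as uncertain is resolved exactly as you guessed: a generic line meets at most $2\cdot 4-1=7$ of the $4\times4$ subgrids, so the inner union bound is over $\leq 7$ terms (not $16$), which is precisely why the constant $7$ sits inside the definition of $D_1$ and makes the final product come out to $\leq 1-p$ on the nose.
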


\begin{proof}
	Denote the probability of failure by $q\leteq 
	\PP(\exists \ell\in \cL_{>\kappa}(\cG):|S\cap \ell|>\kappa)$.
	\begin{align*}
		q
		&\leq \sum_{\ell\in\cL_{>\kappa}(\cG)} \PP(|S\cap \ell|>\kappa) 
		&& \text{using the union bound,} 
		\\
		&\leq \sum_{\ell\in\cL_{>\kappa}(\cG)} \PP\Big(\sum_{(i,j):g_{i,j}(\ell)> 0}s_{i,j}(\ell) > \sum_{(i,j):g_{i,j}(\ell)>0} \kappa_{i,j}(\ell)\Big) 
		&& \text{$s_{i,j}(\ell)\leteq |S_{i,j}\cap \ell|$ and \eqref{eq:kappa},}
		\\
		&= \sum_{\ell\in\cL_{>\kappa}(\cG)} \PP\big( \exists (i,j):g_{i,j}(\ell)>0, \quad s_{i,j}(\ell) > \kappa_{i,j}(\ell)\big)  
		&& \text{by contradiction,}
		\\
		&\le \sum_{\ell\in\cL_{>\kappa}(\cG)} \; \sum_{(i,j):g_{i,j}(\ell)>0} \PP(s_{i,j}(\ell) > \kappa_{i,j}(\ell))
		&& \text{using the union bound,}
		\\
		&= \sum_{\ell\in\cL_{>\kappa}(\cG)} \; \sum_{(i,j):g_{i,j}(\ell)>0} \PP\Big(\sum_{P\in \cG_{i,j}\cap \ell} X_P  > \kappa_{i,j}(\ell) \Big)
		&& \text{where $X_P \leteq \mathbf{1}_{\{P\in S_{i,j}\}}$,} 
		\\&\leq \sum_{\ell\in\cL_{>\kappa}(\cG)} \; \sum_{(i,j):g_{i,j}(\ell)>0} \PP\Big(\sum_{P\in \cG_{i,j}\cap \ell} X_P  > \beta_{i,j}(\ell)\cdot g_{i,j}(\ell) \Big) && \text{using \eqref{eq:AlphaBeta}.}
		\intertext{
			If $\alpha_{i,j}=1$, then 
			$\sum_{P\in \cG_{i,j}\cap \ell} X_P=s_{i,j}(\ell)=g_{i,j}(\ell)$, but 
			$\beta_{i,j}(\ell)>1$ by \eqref{eq:AlphaBeta}, so the probability above is $0$. 
			To bound the probability above in the case  $\alpha_{i,j}\neq 1$, we verify the applicability of \autoref{CH}. 
			First note that 
			$0\leq \alpha_{i,j}(\ell)<\beta_{i,j}(\ell)$ by \eqref{eq:AlphaBeta}. 
			Next, let $T\subseteq \cG_{i,j}\cap \ell$ be of  size $t_{i,j}(\ell)$ from \eqref{eq:t}. 
			Now $\cG_{i,j}$ corresponds to a complete bipartite graph $\Gamma$, 
			$S_{i,j}$ corresponds to an $r_{i,j}$-regular random subgraph $G$. 
			$T$ corresponds to a matching $M$ of $\Gamma$ (because $\ell$ is generic). 
			In this analogy, the event $\bigcap_{P \in T} \{X_P = 1\}$ translates to $M\subseteq E(G)$. 
			Hence $\PP\left(\bigcap_{P \in T} \{X_P = 1\} \right) =\PP(M\subseteq E(G))\leq K\alpha_{i,j}^{t_{i,j}(\ell)}$ using \autoref{mcKay} because $\alpha_{i,j}=\frac{r_{i,j}}{n/4}$ by definition.}     
		q & \leq \sum_{\ell\in\cL_{>\kappa}(\cG)} \; \sum_{(i,j):g_{i,j}(\ell)>0} 
		K\cdot \exp\Big(-2(\beta_{i,j}(\ell)-\alpha_{i,j})^2\cdot g_{i,j}(\ell)\Big)
		&& \text{by \autoref{CH},}
		\\& \leq \sum_{\ell\in\cL_{>\kappa}(\cG)} \; \sum_{(i,j):g_{i,j}(\ell)>0} 
		\frac{1-p}{7Ln^{4-3\epsilon}}
		&& \text{by \eqref{eq:AlphaBeta},}
		\\& \leq \sum_{\ell\in\cL_{>\kappa}(\cG)} \frac{1-p}{L n^{4-3\epsilon}}
		&& \text{since } |\{(i,j):g_{i,j}(\ell)>0\}|\leq 7,
		\\& \leq 1-p
		&& \text{using \autoref{lem:richlines} with } \kappa\geq n^\epsilon.
	\end{align*}
	The statement follows from $\PP(\forall \ell\in\cL(\cG): |S\cap \ell|\leq \kappa)=1-q$.
\end{proof}

For large enough values of $\kappa$, the inequalities of the previous statement can be fulfilled.
\begin{lemma}\label{lem:feasibility}
	Let $0\leq p< 1$,~ $0\le \epsilon\le 1$ and $\kappa\geq n^\epsilon$ be real numbers satisfying 
	$\kappa\geq E(R)+D_2(n,p,\epsilon)$
	for $D_2(n,p,\epsilon)\leteq \sqrt{7n}\cdot D_1(n,p,\epsilon)+7$ 
	where $D_1$ is from \autoref{lem:keyExistence}. 
	
	Then \[\PP\Big(\forall \ell\in\cL(\cG): |S\cap \ell|\leq \kappa\Big)\geq p.\]
\end{lemma}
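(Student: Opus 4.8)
The plan is to derive this from \autoref{lem:keyExistence} by exhibiting, for every $\ell\in\cL(\cG)$ and every subgrid $\cG_{i,j}$ with $g:=g_{i,j}(\ell)>0$, an admissible pair $\beta_{i,j}(\ell),\kappa_{i,j}(\ell)$. The natural choice is to let each subgrid absorb its own expected contribution plus a private error term: set
\[\kappa_{i,j}(\ell)\leteq \alpha_{i,j}\,g_{i,j}(\ell)+D_1(n,p,\epsilon)\sqrt{g_{i,j}(\ell)}+1.\]
For $\beta_{i,j}(\ell)$, when $\alpha_{i,j}=1$ I would simply take $\beta_{i,j}(\ell)\leteq 1+D_1(n,p,\epsilon)/\sqrt{g_{i,j}(\ell)}$ (here \eqref{eq:t} is not required, by \autoref{lem:keyExistence}); when $\alpha_{i,j}<1$ I would take the smallest $\beta$ of the admissible form $\alpha_{i,j}+\frac{(1-\alpha_{i,j})t}{g_{i,j}(\ell)}$ with $t\in\N$ that is at least $\alpha_{i,j}+D_1(n,p,\epsilon)/\sqrt{g_{i,j}(\ell)}$, i.e.\ $t_{i,j}(\ell)\leteq\big\lceil D_1(n,p,\epsilon)\sqrt{g_{i,j}(\ell)}/(1-\alpha_{i,j})\big\rceil$, which is then a nonnegative integer, so \eqref{eq:t} holds automatically.

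Next I would check the two inequalities in \eqref{eq:AlphaBeta}. The left one, $\alpha_{i,j}+D_1/\sqrt{g_{i,j}(\ell)}\le\beta_{i,j}(\ell)$, holds by the very choice of $t_{i,j}(\ell)$ (or trivially when $\alpha_{i,j}=1$), and $\beta_{i,j}(\ell)>0$ because $D_1(n,p,\epsilon)>0$ — note $2-3\epsilon/2\ge\tfrac12$ for $\epsilon\le1$ and $7KL/(1-p)>1$ once $n$ is large. The right one, $\beta_{i,j}(\ell)<\kappa_{i,j}(\ell)/g_{i,j}(\ell)$, is where the $+1$ in $\kappa_{i,j}(\ell)$ is spent: using the elementary fact $\lceil x\rceil<x+1$ for every real $x$, one gets $\beta_{i,j}(\ell)<\alpha_{i,j}+D_1/\sqrt{g_{i,j}(\ell)}+1/g_{i,j}(\ell)=\kappa_{i,j}(\ell)/g_{i,j}(\ell)$ in every case (the edge cases $\alpha_{i,j}\in\{0,1\}$ need a one-line separate check but go through).

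The only genuinely quantitative point — and the one $D_2=\sqrt{7n}\,D_1+7$ is tailored for — is \eqref{eq:kappa}. Summing the definition of $\kappa_{i,j}(\ell)$ over the subgrids $\ell$ meets gives $\sum_{(i,j):g_{i,j}(\ell)>0}\kappa_{i,j}(\ell)=E_R(\ell)+D_1(n,p,\epsilon)\sum_{(i,j):g_{i,j}(\ell)>0}\sqrt{g_{i,j}(\ell)}+m$, where $m=|\{(i,j):g_{i,j}(\ell)>0\}|\le 7$ (a line crosses at most $4+4-1$ cells of a $4\times4$ array, as already used in \autoref{lem:keyExistence}). Here I would apply Cauchy--Schwarz, $\sum_{(i,j):g_{i,j}(\ell)>0}\sqrt{g_{i,j}(\ell)}\le\sqrt m\cdot\sqrt{\sum_{(i,j)}g_{i,j}(\ell)}=\sqrt{m\,|\cG\cap\ell|}\le\sqrt{7n}$, using that a generic secant of $[1,n]^2$ contains at most $n$ grid points. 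Combined with $E_R(\ell)\le E(R)$ and $m\le7$, this yields $\sum\kappa_{i,j}(\ell)\le E(R)+D_1(n,p,\epsilon)\sqrt{7n}+7=E(R)+D_2(n,p,\epsilon)\le\kappa$, so \eqref{eq:kappa} holds and \autoref{lem:keyExistence} applies. I expect the main (and rather mild) obstacle to be the bookkeeping around the degenerate values $\alpha_{i,j}\in\{0,1\}$ and keeping the right inequality of \eqref{eq:AlphaBeta} strict; once the estimate $\lceil x\rceil<x+1$ is in place, the whole argument reduces to the Cauchy--Schwarz bound above.
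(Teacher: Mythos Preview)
Your proposal is correct and follows essentially the same route as the paper: define $\kappa_{i,j}(\ell)$ as the expected contribution $\alpha_{i,j}g_{i,j}(\ell)$ plus a Chernoff error $D_1\sqrt{g_{i,j}(\ell)}$ plus a rounding cushion, pick $\beta_{i,j}(\ell)$ in the resulting interval to make $t_{i,j}(\ell)$ an integer, and then bound $\sum\sqrt{g_{i,j}(\ell)}\le\sqrt{7n}$ (the paper calls this Jensen, you call it Cauchy--Schwarz). The only cosmetic difference is that the paper uses $+1-\alpha_{i,j}$ where you use $+1$; your choice is marginally looser but actually handles the degenerate case $\alpha_{i,j}=1$ more cleanly, since then the paper's half-open interval for $\beta$ becomes empty.
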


\begin{proof}
	Define $\kappa_{i,j}(\ell)$ such that the interval for $\beta_{i,j}(\ell)$ in \eqref{eq:AlphaBeta} (is well-defined and) is of length $\frac{1-\alpha_{i,j}}{g_{i,j}(\ell)}$, i.e. set 
	$\kappa_{i,j}(\ell)\leteq 
	\alpha_{i,j}\cdot g_{i,j}(\ell)   + \sqrt{g_{i,j}(\ell)}\cdot D_1(n,p,\epsilon) + 1-\alpha_{i,j}$. 
	If $\alpha_{i,j}\neq 1$, then let $\beta_{i,j}(\ell)$ be the unique real number from this interval that satisfies \eqref{eq:t}. If $\alpha_{i,j}=1$, then we take $\beta_{i,j}(\ell)$ arbitrarily from this interval. 
	Then by definitions and $|\{(i,j):g_{i,j}(\ell)>0\}|\leq 7$, the following holds: 
	\begin{align*}
		\sum_{(i,j):g_{i,j}(\ell)>0} \kappa_{i,j}(\ell)
		&\leq E(R) + D_1(n,p,\epsilon)\cdot \Big(\sum_{(i,j):g_{i,j}(\ell)>0} \sqrt{g_{i,j}(\ell)}\Big) +7
		\\&\leq E(R) + D_1(n,p,\epsilon)\cdot \sqrt{7\cdot |\cG\cap \ell|}  + 7
		&& \text{by Jensen's inequality,}
		\\&\leq  E(R) + D_2(n,p,\epsilon)
		&& \text{since } |\cG\cap \ell|\leq n,
		\\&\leq \kappa
		&& \text{by the assumption on $\kappa$.}
	\end{align*}
	Hence \eqref{eq:kappa} is also satisfied. 
	Thus we may apply \autoref{lem:keyExistence} with the choices above to get the statement.
\end{proof}

\begin{cor}\label{cor:feasibleInduceSolution}
	Let $R$ be $(k,\delta)$-feasible for some $\delta<1$ and $k\geq n^\epsilon$, where $0\le \epsilon\le 1$. 
	Let   $0\leq p< 1$ and $h\in\N$.
	Assume $k\geq D_3(n,p,\epsilon,\delta,h)\leteq (D_2(n,p,\epsilon)+h)/(1-\delta)$ 
	where $D_2$ is from \autoref{lem:feasibility}. 
	Then
	\[\PP(\text{$S$ is a no-$(k+1)$-in-line set of size $kn$ having reserve $h$})\geq p.\]
	In particular, choosing $p\to 1$ as $n\to \infty$, we see that the probabilistic construction of \autoref{def:bi-unfonorm} works w.h.p. 
\end{cor}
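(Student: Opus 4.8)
\textbf{Proof plan for \autoref{cor:feasibleInduceSolution}.}
The plan is to combine the $(k,\delta)$-feasibility of $R$ with \autoref{lem:feasibility} in a direct way. First I would observe that feasibility gives two things: the row and column sums of $R$ all equal $k$, and $E(R)\leq \delta k$. The first fact controls the axis-parallel lines deterministically: for a horizontal line $\ell$ lying in the $j$-th band of rows, the set $S\cap \ell$ is the union over $i$ of $S_{i,j}\cap \ell$, and since $S_{i,j}$ is an $r_{i,j}$-factor of $\cG_{i,j}$ it meets every row of that subgrid in exactly $r_{i,j}$ points; hence $|S\cap\ell|=\sum_{i=1}^4 r_{i,j}=k$, and symmetrically for vertical lines. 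In particular $|S|=kn$ always, and every axis-parallel line already meets $S$ in exactly $k\geq k-h$ points, so these lines never violate the no-$(k+1)$ condition nor the reserve condition (note $h\le k$ is forced since $r_{i,j}\ge 0$ and not all are $0$).

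Next I would set $\kappa\leteq k-h$ and check that the hypotheses of \autoref{lem:feasibility} hold with this $\kappa$ and the given $p,\epsilon$. Using $E(R)\leq \delta k$ and the assumed lower bound $k\geq D_3(n,p,\epsilon,\delta,h)=(D_2(n,p,\epsilon)+h)/(1-\delta)$, we get
\[
E(R)+D_2(n,p,\epsilon)\leq \delta k + (1-\delta)k - h = k-h=\kappa,
\]
so the inequality $\kappa\geq E(R)+D_2(n,p,\epsilon)$ is satisfied. For the side condition $\kappa\geq n^\epsilon$: we have $\kappa=k-h$, and $k\geq n^\epsilon$ by assumption; one checks $k-h\geq n^\epsilon$ holds as well for large $n$ because $D_3$ grows like $\sqrt{n\log n}$ while $h$ is a fixed constant, so $k-h\geq k/2\geq n^\epsilon$ say (or one simply absorbs this into the threshold $N_C$). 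Then \autoref{lem:feasibility} yields
\[
\PP\big(\forall \ell\in\cL(\cG):\ |S\cap\ell|\leq k-h\big)\geq p.
\]

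Finally I would assemble the conclusion. On the event just described, every generic secant meets $S$ in at most $k-h$ points, which is both the reserve-$h$ property and (a fortiori) the no-$(k+1)$-in-line property on generic secants; combined with the deterministic fact that axis-parallel lines meet $S$ in exactly $k$ points and $|S|=kn$, we conclude that with probability at least $p$, $S$ is a no-$(k+1)$-in-line set of size $kn$ with reserve $h$. The last sentence of the statement then follows by taking any sequence $p=p(n)\to 1$: since $D_2(n,p,\epsilon)$ depends on $p$ only through the mild term $\tfrac12\log\big(7KL/(1-p)\big)$ inside $D_1$, the threshold $D_3$ still grows like $\Theta(\sqrt{n\log n})$, so the hypothesis $k\geq C\sqrt{n\log n}$ of \autoref{main} eventually dominates it. There is essentially no obstacle here beyond bookkeeping; the real work has already been done in \autoref{lem:keyExistence} and \autoref{lem:feasibility}, and the only mild point to be careful about is that the constant $h$ (needed later to run the reduction to general $n,k$) can indeed be absorbed without spoiling the asymptotic growth rate of the threshold on $k$.
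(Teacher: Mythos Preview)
Your proposal is correct and follows essentially the same route as the paper: observe that feasibility forces exactly $k$ points on every axis-parallel line, set $\kappa\leteq k-h$, verify $E(R)+D_2\leq \delta k+(1-\delta)k-h=\kappa$ from the bound $k\geq D_3$, and invoke \autoref{lem:feasibility}. The paper's version is terser and does not explicitly discuss the side condition $\kappa\geq n^\epsilon$ or the ``w.h.p.'' sentence, but the argument is the same.
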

\begin{proof}
	Note that there are exactly $k$ points in every row and every column of $S$. 
	To bound $|S\cap \ell|$ for a generic secant $\ell$, note that 
	rearranging the assumption gives $D_2(n,p,\epsilon)\leq  (1-\delta)k-h$, so 
	$E(R)+D_2(n,p,\epsilon) \leq \delta k + (1-\delta)k-h = k-h$.
	Hence applying \autoref{lem:feasibility} with  $\kappa \leteq k-h$ gives the statement.
\end{proof}

\begin{prop}\label{thm:main-4|n}
	For every $C>\frac{5}{2}\sqrt{35}$, there exists
	$n_C$ such that 
	for every $n\geq n_C$ with $4\mid n$, 
	and for every $k$ satisfying $C\sqrt{n\log(n)}\leq k\leq \frac{5}{6}n$ and $10\mid k$, 
	there exists a no-$(k+1)$-in-line set $S\subseteq [1,n]^2$ of size $kn$ having reserve $15$.  
	In particular, $f_k(n)=kn$.
\end{prop}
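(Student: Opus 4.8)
The plan is to deduce \autoref{thm:main-4|n} from \autoref{cor:feasibleInduceSolution} applied to the explicit matrix $R$ of \eqref{eq:defR}, once its numerical hypothesis is checked for $C>\tfrac52\sqrt{35}$ and $n$ large. Fix $\delta=\tfrac45$ and $h=15$. By \autoref{lem:delta}, whenever $10\mid k$ and $k\le\tfrac56n$ this $R$ is $(k,\tfrac45)$-feasible, so the only remaining input to \autoref{cor:feasibleInduceSolution} is a parameter $\epsilon=\epsilon(C)<\tfrac12$ (to be pinned down) together with a probability $p=p(n)\to1$, say $p(n)=1-1/\log n$, and the verification of the two conditions $k\ge n^\epsilon$ and $k\ge D_3(n,p,\epsilon,\tfrac45,15)$. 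The first is immediate for $n\ge n_C$ since $k\ge C\sqrt{n\log n}\ge\sqrt n\ge n^\epsilon$; the work is in the second.

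For the second condition I unfold the definitions. With $1-\delta=\tfrac15$, \autoref{lem:feasibility} and \autoref{lem:keyExistence} give
\[
D_3=5\bigl(D_2+15\bigr)=5\sqrt{7n}\,D_1+110,
\qquad
D_1=\sqrt{\bigl(2-\tfrac{3\epsilon}{2}\bigr)\log n+\tfrac12\log\!\bigl(7\tfrac{KL}{1-p}\bigr)}.
\]
Since $1-p=1/\log n$, the additive term inside $D_1$ equals $\tfrac12\log(7KL\log n)=o(\log n)$, so $D_1=\sqrt{(2-\tfrac{3\epsilon}{2})\log n}\,(1+o(1))$ and hence
\[
D_3=5\sqrt{7\bigl(2-\tfrac{3\epsilon}{2}\bigr)}\,\sqrt{n\log n}\,(1+o(1))+110 .
\]
As $\epsilon\uparrow\tfrac12$ the constant $5\sqrt{7(2-\tfrac{3\epsilon}{2})}$ decreases to $5\sqrt{7\cdot\tfrac54}=\tfrac52\sqrt{35}$. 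Therefore, given $C>\tfrac52\sqrt{35}$, I first fix $\epsilon<\tfrac12$ close enough to $\tfrac12$ that $5\sqrt{7(2-\tfrac{3\epsilon}{2})}<C$, and then choose $n_C$ large enough that the $(1+o(1))$ factor and the additive $110$ are absorbed, so that $D_3\le C\sqrt{n\log n}\le k$ for all $n\ge n_C$ (with $4\mid n$).

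With both hypotheses of \autoref{cor:feasibleInduceSolution} in force, it follows that with probability at least $p>0$ the random set $S$ of \autoref{def:bi-unfonorm} is a no-$(k+1)$-in-line set of size $kn$ with reserve $15$; positivity of this probability gives the existence of such an $S$, which is the assertion of the proposition. The equality $f_k(n)=kn$ then follows: the lower bound is witnessed by $S$, and the upper bound is the pigeonhole observation that any no-$(k+1)$-in-line set meets each of the $n$ vertical lines of $\cG$ in at most $k$ points.

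The only delicate point is this constant-chasing at the end: $\epsilon$ must be chosen simultaneously below $\tfrac12$ (to keep it admissible and to make $k\ge n^\epsilon$ automatic) and close enough to $\tfrac12$ to bring the leading coefficient of $D_3$ strictly below $C$, while $p$ is sent to $1$ slowly enough that $\log\frac{1}{1-p}=o(\log n)$. Each of these quantities is monotone in the relevant parameter, so there is no genuine obstacle beyond carefully tracking that the threshold obtained is exactly $\tfrac52\sqrt{35}$; everything else is a direct substitution into results already established.
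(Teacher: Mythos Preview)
Your proof is correct and follows the same route as the paper: invoke \autoref{lem:delta} to get feasibility of $R$ with $\delta=4/5$, then apply \autoref{cor:feasibleInduceSolution} with $h=15$ after checking $k\ge D_3$ and $k\ge n^\epsilon$.

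The only difference is a minor simplification on the paper's side: there is no need to take $\epsilon<\tfrac12$ and let it approach $\tfrac12$, nor to send $p\to 1$. The paper simply sets $\epsilon=\tfrac12$ and $p=\tfrac12$. The constraint on $\epsilon$ in \autoref{lem:keyExistence} is $0\le\epsilon\le 1$, so $\epsilon=\tfrac12$ is admissible, and $k\ge C\sqrt{n\log n}\ge n^{1/2}$ holds for $n\ge 3$, so the condition $k\ge n^\epsilon$ is still automatic. With these fixed values the second term inside $D_1$ is a constant (namely $\tfrac12\log(14KL)$), hence $o(\log n)$, and the leading coefficient of $D_3$ is exactly $\tfrac52\sqrt{35}$ without any limiting argument. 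Your version works, but the extra limiting in $\epsilon$ and $p$ is not needed.
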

\begin{proof}     
	By \autoref{lem:delta}, the matrix $R$ from \eqref{eq:defR} is $(k,\delta)$-feasible  for $\delta\leteq 4/5$ and $E(R)=\delta\cdot k$. 
	Set $h\leteq 15$ and, say, $p\leteq 1/2$.
	Then $D_3\leteq D_3(n,p,\epsilon,\delta,h)$ from \autoref{cor:feasibleInduceSolution} takes the form 
	\begin{equation}\label{eq:D3}
		D_3=5\sqrt{\frac{7(4-3\epsilon)}{2} n\log n+\frac{7}{2}n\log(7\cdot 2KL)}+5\cdot(7+15)
	\end{equation}
	So ensure that \autoref{cor:feasibleInduceSolution} can be applied for the largest possible range of $k$, 
	by considering the conditions 
	$k\geq D_3$ and $k\geq n^\epsilon$, 
	we see that we need to set $\epsilon\leteq 1/2$. 
	Then by the choice of $C$, there exists $n_C$ such that 
	$C\sqrt{n\log(n)}\geq D_3$ for every $n\geq n_C$. 
	For these values, \autoref{cor:feasibleInduceSolution} is applicable and gives the statement.
\end{proof}

We end this subsection by observing the generality of the construction above.
\begin{remark}
	One may prove \autoref{thm:main-4|n} without requiring $10\mid k$ by showing the existence of a $(k,\delta_k)$-feasible matrix $R$ for every $k$ for some $\delta_k\to \frac{4}{5}$ as $k\to\infty$. 
	This is possible, e.g. by splitting the set of dense and sparse subgrids into two parts as  $r_{1,1}=r_{2,3}=r_{3,2}=r_{4,4}\approx \frac{2k}{10}\approx r_{1,4}=r_{2,2}=r_{3,3}=r_{4,1}$ for sparse blocks, 
	and $r_{1,3}=r_{2,4}=r_{3,1}=r_{4,2}\approx \frac{3k}{10}\approx r_{1,2}=r_{2,1}=r_{3,4}=r_{4,3}$ for dense blocks, while keeping the row and column sums at  $k$. 
	Working out the details is a bit tedious, so instead, we show a simple argument in \autoref{lem:adjustK} to reduce the general case to $10\mid k$ proved above.
\end{remark}

\begin{remark}\label{rem:C>12.5}
	The argument of this section actually works for any (feasible) matrix $R$ of arbitrary size $m\times m$ (upon decomposing $\cG$ into $m\times m$ subgrids and assuming $m\mid n$). 
	The only changes needed are in 
	\autoref{lem:keyExistence} and \autoref{lem:feasibility}, 
	where the bound $|\{(i,j):g_{i,j}(\ell)>0\}|\leq 2m-1$ is to be used instead. 
	Note that, for example, $R=\frac{k}{10}\cdot \left(\begin{smallmatrix}
		3 & 4 & 3\\
		4 & 2 & 4\\
		3 & 4 & 3
	\end{smallmatrix}\right)$
	is also $(k,4/5)$-feasible for any $10\mid k$ and $k\leq \frac{5}{6}n$. 
	Using this setup in the proof of \autoref{thm:main-4|n} instead of \eqref{eq:defR}, 
	by replacing the $7$'s in \eqref{eq:D3} by $2m-1=5$ 
	we see that \autoref{thm:main-4|n}, and consequently \autoref{main}, actually hold for every $C>\frac{25}{2}$ as well.
\end{remark}
\subsection{General case}\label{sec:generalN}

To reduce the general case to the one discussed in \autoref{sec:4|n} (namely $4\mid n$ and $10\mid k$), we need the following elementary constructions to adjust the values of $k$ and $n$ suitably.

\begin{lemma}[Adjusting $k$]\label{lem:adjustK}
	Let $0\leq h\leq k\leq n$ be integers. 
	Assume $S\subseteq [1,n]^2$ is a no-$(k+1)$-in-line set of size $kn$ having reserve $h$. 
	Then there is a no-$(k'+1)$-in-line set $S'\subseteq [1,n]$ of size $k'n$ having reserve $h'$ for 
	every $0\leq h'\leq k'$ with $k-k'=h-h'\geq 0$.
\end{lemma}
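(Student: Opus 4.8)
The plan is to delete a small regular subgraph from $S$. Concretely, recall that $S\subseteq[1,n]^2$ is a no-$(k+1)$-in-line set of size $kn$, so viewed as a bipartite graph it is a $k$-regular spanning subgraph of $K_{n,n}$ (exactly $k$ points in every row and column, the total being $kn$). Set $d\leteq k-k'=h-h'\geq 0$. First I would invoke a classical factorisation result: a $k$-regular bipartite graph decomposes into $k$ perfect matchings (K\"onig's edge-colouring theorem / repeated application of Hall's theorem), hence it contains a $d$-factor $F$ for every $0\leq d\leq k$. Define $S'\leteq S\setminus F$, viewed back as a point set in $[1,n]^2$.

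The verification then splits into the three claimed properties. For the size: $S'$ is $(k-d)=k'$-regular as a bipartite graph, so $|S'|=k'n$. For the axis-parallel lines: every row and column of $S'$ contains exactly $k'$ points, in particular at most $k'$, so no horizontal or vertical line carries $k'+1$ points. For a generic secant $\ell\in\cL(\cG)$: since $S$ has reserve $h$ we have $|S\cap\ell|\leq k-h$, and $S'\subseteq S$, so $|S'\cap\ell|\leq|S\cap\ell|\leq k-h=k'-h'$. This simultaneously shows $S'$ is a no-$(k'+1)$-in-line set (as $k'-h'\leq k'$) and that it has reserve $h'$.

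I expect no serious obstacle here; the only mild point worth stating carefully is the existence of the $d$-factor, which is exactly the standard fact that a bipartite $k$-regular multigraph is the edge-disjoint union of $k$ perfect matchings — here $S$ is simple, so the $d$-factor is a simple graph too, and removing it keeps $S'$ a genuine subset of the grid with no repeated points. (This is the same Hall-type argument already used in the proof of \autoref{thm:kLarge}.) Everything else is immediate from monotonicity under deletion (reserve and the no-$(k+1)$ property only improve when passing to a subset) and from regularity bookkeeping.
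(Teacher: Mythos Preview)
Your proposal is correct and follows essentially the same approach as the paper: use K\H{o}nig's edge-colouring theorem to extract a $(k-k')$-factor $F\subseteq S$, set $S'\leteq S\setminus F$, and verify directly that $S'$ is a $k'$-factor whose generic secants carry at most $k-h=k'-h'$ points.
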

\begin{proof}
	By Kőnig's line colouring theorem, we may select a $(k-k')$-factor $F\subseteq S$. 
	Define $S'\leteq S \setminus F$. 
	Now $S'\subseteq \cG$ is a $k'$-factor and every generic secant of $\cG$ still contains at most $k-h=k'-h'\leq k'$ lines. 
	So $S'$ is indeed a no-$(k'+1)$-in-line set of size $|S|-|F|=k'n$ and has reserve $k'-(k-h)=h'$ as required.
\end{proof}

\begin{lemma}[Adjusting $n$]\label{lem:adjustN}
	Let $0\leq h''\leq k'\leq n$ be integers with $2\mid h''$. 
	Assume $S'\subseteq [1,n]^2$ is a no-$(k'+1)$-in-line set of size $k'n$ having reserve   $h''$. 
	Then there exists a no-($k'+1)$-in-line set $S''\subseteq [1,n']^2$ of size $k'n'$ for 
	$n' \leteq n+\frac{h''}{2}$.
\end{lemma}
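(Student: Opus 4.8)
The plan is to extend the grid $[1,n]^2$ to $[1,n']^2$ with $n'=n+h''/2$ by appending $h''/2$ new rows and $h''/2$ new columns, and to place exactly $k'$ new points in each new row and each new column so that the enlarged point set $S''$ remains a no-$(k'+1)$-in-line set. Write $t\leteq h''/2$, so we add rows indexed $n+1,\dots,n+t$ and columns indexed $n+1,\dots,n+t$. Every line of the plane meets this enlarged configuration; the lines fall into three groups: the new horizontal/vertical lines (the $2t$ new rows and columns), the old horizontal/vertical lines of $[1,n]^2$, and the generic secants. We must design the point set added in the ``border'' region $[1,n']^2\setminus[1,n]^2$ so that all three groups stay within the bound $k'$.

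First I would handle the old axis-parallel lines: each old row and each old column of $[1,n]^2$ already contains exactly $k'$ points of $S'$, so we must add \emph{no} further points to the old rows and old columns; equivalently, the new points must all lie in the corner block $B\leteq [n+1,n']\times[n+1,n']$ of size $t\times t$. Next, inside $B$ we need a configuration with exactly $k'$ points in each of its $t$ rows and each of its $t$ columns — but $k'$ can exceed $t$, so this is a $k'$-regular ``multi'' configuration only if $k'\le t$; in general we instead want each of the $2t$ new lines to receive exactly $k'$ points total, and since a new row can only gain points from $B$, we genuinely need $k'\le t=h''/2$. Here is where the hypothesis $h''\le k'$... wait — that goes the wrong way. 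Let me reconsider: the hypothesis is $h''\le k'$, i.e. $t\le k'/2<k'$, so a $t\times t$ block cannot supply $k'$ points to each new row. Hence the new points must \emph{also} use the two off-diagonal border strips $[1,n]\times[n+1,n']$ and $[n+1,n']\times[1,n]$ — but those meet old columns/rows, which are already full. This tension is exactly the crux, and resolving it is the main obstacle.

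The resolution I would pursue: we are allowed to \emph{also delete} points from $S'$, using its reserve. Before adding the border, first delete a suitable set of points from $S'$ to make room in the old rows and columns, using a $t$-factor-type argument together with the reserve $h''=2t$ on generic secants so that deletions only help on generic lines. Concretely, I would choose within $S'$ a subgraph that is $t$-regular (a $t$-factor exists by Kőnig's theorem since $S'$ is $k'$-regular and $t\le k'$), remove it to get a $(k'-t)$-regular set $S'_0\subseteq[1,n]^2$ whose generic secants contain at most $k'-h''\le k'-t$ points, then build the border block as a configuration on $[1,n']^2$ putting $t$ points into each old row and old column (filling them back to $k'$) and $k'$ points into each new row and new column, arranged so that every generic secant through the border picks up at most $t$ additional points. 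Because a generic secant meets the $t\times t$ corner block $B$ in at most $t$ points and meets each off-diagonal strip in at most $t$ points as well — and one can orient the strip configurations (e.g. along near-diagonal directions or via a carefully chosen bipartite $t$-regular graph) so that no single generic line hits more than $t$ new points in total — the generic secant count stays at $\le (k'-h'')+t = k'-t\le k'$. The axis-parallel lines are then exactly $k'$ by construction, so $S''$ is a no-$(k'+1)$-in-line set of size $(k'-t)n + (\text{border points}) = k'n + k't = k'(n+t)=k'n'$, as required.

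The step I expect to be genuinely delicate is constructing the border configuration in the two off-diagonal strips and the corner block \emph{simultaneously} so that (i) old rows/columns receive exactly $t$ points, (ii) new rows/columns receive exactly $k'$ points, and (iii) \emph{every} generic secant — including those of large modulus that can pass through both strips and the corner — accumulates at most $t$ new points; verifying (iii) uniformly over all generic secants is where the real work lies, and I would likely do it by choosing the strip configurations to be unions of $t$ pairwise-``parallel'' shifted partial transversals (so each generic direction meets the union in $\le t$ points, analogous to the slope-$\pm1$ analysis in the proof of \autoref{thm:kLarge}), rather than invoking a generic existence theorem.
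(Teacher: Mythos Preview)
Your approach has a fatal counting obstruction at the step where you delete a $t$-factor (with $t=h''/2$) from $S'$. After this deletion, every one of the $n$ old rows has exactly $k'-t$ points, so each must receive exactly $t$ new points from the border. But the only border cells lying in old row $y\in[1,n]$ are the $t$ cells $(n+1,y),\dots,(n+t,y)$, so \emph{all} of them must be filled. This forces the entire strip $[n+1,n']\times[1,n]$ into $S''$, and symmetrically the entire strip $[1,n]\times[n+1,n']$. Consequently each new column $\{n+i\}\times[1,n']$ already contains $n$ points from the first strip alone; since you need exactly $k'$ points there and $k'\leq n$, the degree constraints are inconsistent unless $k'=n$. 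No clever placement in the strips or corner can repair this --- the failure is purely arithmetical, before any generic-secant analysis begins.

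The paper avoids this by deleting a much smaller, \emph{irregular} set. It decomposes $S'$ into $1$-factors $\cP_1,\dots,\cP_{k'}$, picks for each $i\leq t$ a $k'$-subset $\cQ_i\subseteq\cP_i$, and deletes $\bigcup_i\cQ_i$ (only $tk'$ points rather than $tn$). For each deleted point $(x,y)\in\cQ_i$ it adds the two projections $(n+i,y)$ and $(x,n+i)$ to the new column and new row indexed $n+i$. This automatically keeps every old row and column at exactly $k'$ (a deletion in row $y$ coming from $\cQ_i$ is compensated by exactly the one added point $(n+i,y)$), fills each new row/column with exactly $k'$ points by design, and leaves the corner block empty. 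The generic-secant bound is then trivial: a generic line meets the $t$ new columns in at most $t$ points and the $t$ new rows in at most $t$ points, so the border contributes at most $2t=h''$, which the reserve on $S'\cap[1,n]^2$ absorbs exactly. (Note this is $2t$, not the $t$ you hoped for; your tighter bound is neither achievable with a border of this shape nor needed.)
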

\begin{proof}
	Using Kőnig's line coloring theorem, we may decompose   
	$S'$ as a disjoint union $S'=\cP_1\cup  \cP_2\cup \dots \cup \cP_{k'}$, where each $\cP_i$ ($1\le i\le k'$) is a subset corresponding to a $1$-factor.
	For $1\leq i\leq h''/2$, pick $\cQ_i\subseteq \cP_i$ with $\cardinality{\cQ_i}=k'$, 
	and define 
	$\cA_i\leteq \{(n+i,y):(x,y)\in\cQ_i\text{ for some } x\}$ and 
	$\cB_i\leteq \{(x,n+i):(x,y)\in\cQ_i\text{ for some } y\}$. 
	Note that $\cardinality{\cA_i} = \cardinality{\cB_i} = \cardinality{\cQ_i}=k'$ as $\cQ_i$ has at most $1$ point in every row and column.
	Furthermore, all of these  sets are pairwise disjoint as 
	$\cA_i\subseteq \{n+i\}\times [1,n]$ and $\cB_i\subseteq [1,n]\times \{n+i\}$.
	Define the disjoint union
	\[S'' \leteq \Big(\cP\setminus \bigcup_{i=1}^{h''/2} \cQ_i\Big) \cup  \bigcup_{i=1}^{h''/2} (\cA_i \cup \cB_i) \subseteq [1,n']^2.\] 
	This means that we obtained $S''$ from $S'$ by erasing every point $(x,y)\in \cQ_i\subseteq \cP$ and adding their orthogonal projections to the lines $\ell[x=n+i]$ and $\ell[y=n+i]$, for all $i$.
	Now we show that $S''$ satisfies the statement.

	It is immediate that the constructed point set $S''$ intersects every horizontal and vertical grid line of $[1,n']^2$ in exactly $k'$ points.
	Now let $\ell$ be a generic line. 
	By construction, we have 
	\[\cardinality{S''\cap \ell} 
	\leq \underbrace{\cardinality{S''\cap \ell\cap [1,n]^2}}_{\leq \cardinality{\cP\cap \ell}\leq k'-h''} 
	+ \underbrace{\cardinality{S''\cap \ell\cap ([n+1,n']\times [1,n'])}}_{\leq \cardinality{[n+1,n']}=h''/2} 
	+ \underbrace{\cardinality{S''\cap \ell\cap ([1,n']\times [n+1,n'])}}_{\leq \cardinality{[n+1,n']}=h''/2}
	\leq k',\]
	as required.
\end{proof}

We are now ready to complete the proof of the main result of this paper.

\begin{proof}[{Proof of \autoref{main}}]
	Let $C>\frac{5}{2}\sqrt{35}$ be an arbitrary real number. 
	Let $n_C$ be given by \autoref{thm:main-4|n}. 
	To show that $N_C\leteq \max\{n_C,68\}$ satisfies the statement,  
	pick integers $n',k'$ with $n'\geq N_C$ and $C\sqrt{n'\log n'}\leq k'\leq n'$. 
	Considering the horizontal lines of $[1,n']^2$, the pigeonhole principle shows that it is enough to show the existence of a no-$(k'+1)$-in-line set in $[1,n']^2$ of size $k'n'$.

	If $k'\geq \frac{2}{3}n'$, then the explicit construction of \autoref{thm:kLarge} shows the existence of a no-$(k'+1)$-in-line set in $[1,n']^2$ of size $k'n'$. 
	
	Otherwise, we have  $k'<\frac{2}{3}n'$.  
	Let $n\leteq 4 \lfloor n'/4\rfloor$ and let $k=10\lceil k'/10\rceil$. 
	Note that $4\mid n$ and $10\mid k$. 
	Furthermore, $n'-3\leq n\leq n'$ and $k'\leq k\leq k'+9$ implies  
	\[
	C\sqrt{n\log n}\leq C \sqrt{n'\log n'}\leq k'\leq k
	\text{\quad and\quad }
	k\leq k'+9<\frac{2}{3}n'+9\leq \frac{2}{3}(n+3)+9\leq \frac{5}{6}n
	\]
	where the last inequality follows from $n\geq 66$ (which is true as $n'\geq 68$). 
	Then the  Bi-uniform Random Construction of \autoref{thm:main-4|n} is applicable and gives 
	a no-$(k+1)$-in-line set $S\subseteq [1,n]^2$ of size $kn$ having reserve $h\leteq 15$. 
	Then setting $h'\leteq h-(k-k')\geq 6$, we may apply \autoref{lem:adjustK} to obtain 
	a no-$(k'+1)$-in-line set $S'\subseteq [1,n]^2$ of size $k'n$ having reserve $h'$. 
	Since $h''\leteq  2(n-n')\leq 6\leq h'$, $S'$ has reserve $h''$,   so \autoref{lem:adjustN} gives the desired no-$(k'+1)$-in-line set $S''\subseteq [1,n']^2$ of size $k'n'$.
\end{proof}

\section{Concluding remarks and open questions}\label{sec:openQUestions}

Our paper completely solves the no-$(k+1)$-in-line problem provided that $k$ is sufficiently large with respect to $n$, with a rather moderate lower bound on $k$. It would be very interesting to see whether a similar statement holds when $k$ is in a lower range, $k=\Omega(n^{\varepsilon})$ for every $\varepsilon>0$. We pose this as an open problem. 
{We  believe that the upper bound $f_n(k)\leq kn$ is tight in this range as well, that is why we did not elaborate too much on optimizing the constant $C$ for which our proof method works with $k>C\sqrt{n\log n}$.}

\begin{problem}
	Improve bounds on the order of magnitude of $k$ for which $f_k(n)=kn$ holds.
\end{problem}

While the lower bound on $f_2(n)$ has not been improved from the  asymptotic value $1.5n$ \cite{hall1975some}, not much is known about  $f_k(n)$ when $k>2$ is a constant. 

\begin{problem}
	Improve the lower bounds on $f_k(n) $ for $k\geq 2$.
\end{problem}

At last, we point out a different approach to prove that our Bi-uniform construction yields a suitable point set, via the bipartite variant of the celebrated  Kim-Vu sandwich conjecture.
The original conjecture reads as follows.

\begin{conj}[Kim-Vu sandwich conjecture]
	If $d \gg \log n$, then for some sequences $p_1 = p_1(n) \sim d/n$ and $p_2 = p_2(n) \sim d/n$ there is a joint distribution of a random $d$-regular graph $\R(n,d)$ and two binomial random graphs $\G(n,p_1)$ and $\G(n,p_2)$ such that with high probability, we have
	\begin{equation*}
		\G(n,p_1) \subseteq \R(n,d) \subseteq \G(n,p_2).
	\end{equation*}
\end{conj}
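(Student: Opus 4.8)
The plan is to exhibit both containments by a single explicit coupling, built in two monotone phases. In the first phase I would realise $\R(n,d)$ as the output of an \emph{edge-completion procedure} applied to a binomial graph, so that the lower inclusion $\G(n,p_1)\subseteq\R(n,d)$ holds by construction; in the second phase I would couple $\R(n,d)$ with a slightly denser $\G(n,p_2)$ by adding further independent random edges, giving $\R(n,d)\subseteq\G(n,p_2)$. The two densities are tuned around $d/n$ with a small but crucial gap: $p_1$ is chosen just below $d/n$ so that w.h.p. $\Delta(\G(n,p_1))\le d$, and $p_2$ just above $d/n$ so that w.h.p. every degree of $\R(n,d)$ is at most $p_2 n$. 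A Chernoff estimate of the flavour of \autoref{cor:chern_binomial} shows the required gap is of order $\sqrt{d\log n}$, which is $o(d)$ precisely when $d\gg\log n$, consistent with the hypothesis.

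For the first phase, I would start from $G_1\sim\G(n,p_1)$ conditioned on $\Delta(G_1)\le d$ (a conditioning on an event of probability $1-o(1)$, hence harmless) and then repeatedly pick a vertex of degree $<d$ and insert a suitably randomised incident non-edge, using a switching/pairing-model rule designed so that the conditional law of the evolving graph stays close to uniform on graphs with its current degree sequence. The goal is to prove that the terminal graph is $o(1)$-close in total variation to the uniform $d$-regular law. This is exactly where the enumeration technology underlying \autoref{mcKay} enters: subgraph-containment probabilities of the form $(d/n)^{|M|}(1+o(1))$, together with small-subgraph conditioning, are what one needs to control the Radon--Nikodym derivative of the coupled law against uniform throughout the procedure, not merely at the endpoints. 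The second phase is comparatively routine, since it only asks for a monotone coupling onto a denser binomial graph and there is no upper degree constraint to preserve there.

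The hard part will be the regime where $d$ is close to $\log n$, which is precisely the regime in which the sandwich is expected to be tight. There $\R(n,d)$ is only \emph{barely} rigid: the number of loops and multi-edges in the pairing model is polylogarithmic rather than $O(1)$, so the crude configuration-model comparison loses a factor that is not $1+o(1)$, and one cannot afford any genuine slack between $p_1$, $d/n$, and $p_2$. Keeping the count of ``bad'' local configurations under control during the entire completion procedure — rather than only bounding it for the uniform model — is the step at which every known approach has had to work hardest, and it is why the conjecture is still open at this level of generality. For the purpose of transferring the no-$(k+1)$-in-line construction of this paper it would in fact suffice to establish only the one-sided bipartite inclusion $\G(n,n,p_1)\subseteq\R(n,n,d)$ when $d\gg\sqrt{n\log n}$; in that range the degree fluctuations are a lower-order effect and the completion step can be analysed directly by combining \autoref{CH} with \autoref{mcKay}, which is essentially the content of the construction already carried out in \autoref{sec:4|n}.
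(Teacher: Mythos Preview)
The statement you are addressing is a \emph{conjecture}, not a theorem, and the paper does not attempt to prove it. It appears in \autoref{sec:openQUestions} solely as background: the authors state the Kim--Vu sandwich conjecture, note that Gao, Isaev and McKay have proved it only in the restricted range $d\gg\log^4 n$, and then explain that the bipartite analogue (\autoref{bipartite_sandwich}) would give an alternative route to the main result of the paper in a weaker regime. There is therefore no proof in the paper to compare your proposal against.

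Your write-up is a reasonable high-level sketch of the kind of coupling strategy people have pursued, and you yourself correctly identify the genuine obstruction: controlling the Radon--Nikodym derivative of the completion process against the uniform $d$-regular law throughout the procedure, not just at the endpoints, in the regime where $d$ is close to $\log n$. As you say, this is exactly where all known approaches fall short, and it is why the conjecture remains open in its full generality. So what you have written is not a proof but an outline of the known difficulties; that is fine as commentary, but it should not be presented as a proof proposal for a statement the paper merely quotes as an open problem.
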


This has been proved  by Gao, Isaev and McKay \cite{gao} for $d \gg \log^4 n$.

Klimo\v{s}ová, Reiher, Ruci\'nski and \v{S}ileikis recently proved the following bipartite variant of the Kim-Vu sandwich conjecture. Let  $\Rnnp$ denote a random graph chosen uniformly from the set of $pn$-regular bipartite graphs on $n+n$ vertices, and let $\Gnnp$ denote 
the binomial bipartite random graph where each edge of
$K_{n,n}$ is chosen independently with probability $p$.

\begin{theorem}[Sandwich theorem on regular random bipartite graphs, \cite{klimovsova2023sandwiching}]\label{bipartite_sandwich}
	If $p\gg\frac{\log n}{n}$ and $1 - p \gg \left(\frac{\log n}{n} \right)^{1/4}$,
	then for some $p'\sim p$, there exists a joint distribution of $\Gnnpprime$ and $\Rnnp$ such that $\Gnnpprime\subseteq \Rnnp$ holds with high probability, while for $p\gg\left(\frac{\log^{3}n}{n}\right)^{1/4}$, the opposite embedding holds.
\end{theorem}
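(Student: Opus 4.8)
The plan is to prove the two containments separately, each by an explicit coupling whose engine is a precise, two-sided asymptotic enumeration of $d$-regular bipartite graphs on $n+n$ vertices (write $d\leteq pn$) that contain a prescribed sparse set of forced edges and avoid a prescribed sparse set of forbidden edges — the kind of estimate supplied by the complex-analytic method of \cite{isaev2018complex}, of which \autoref{mcKay} is a one-sided special case, controlled in terms of the residual degree sequence. For the lower containment $\Gnnpprime\subseteq\Rnnp$ — the direction that needs only $p\gg\log n/n$ together with $1-p\gg(\log n/n)^{1/4}$ — I would build $\Rnnp$ by exposing the $n^2$ pairs of $\Knn$ in a uniformly random order $e_1,e_2,\dots$, keeping track of the sets $A_i$ and $B_i$ of already-exposed pairs found to be edges, respectively non-edges. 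Writing $q_i$ for the conditional probability that $e_i$ is an edge given $\{A_i\subseteq E(\Rnnp),\ B_i\cap E(\Rnnp)=\emptyset\}$, and taking independent uniforms $U_i\in[0,1]$, I declare $e_i$ to be an edge of $\Rnnp$ iff $U_i\le q_i$ and an edge of $\Gnnpprime$ iff $U_i\le p'$; then $\Rnnp$ has the correct distribution by the chain rule and $\Gnnpprime$ by independence of the $U_i$, and $\Gnnpprime\subseteq\Rnnp$ holds on the event that $q_i\ge p'$ whenever $U_i\le p'$.

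With $p'=(1-o(1))p$, everything reduces to showing that this event holds with high probability. The quantity $q_i$ is a ratio of two enumeration counts, and as long as the exposed set $A_i\cup B_i$ is sparse the enumeration formula gives $q_i=(1+o(1))\,\widehat d_u\,\widehat d_v/\widehat m$, where $e_i=uv$, the $\widehat d$'s are the residual degrees (the numbers of edges still to be placed at $u$ and $v$), and $\widehat m$ is the total number of residual edges. Since the exposure order is uniform, at each step the number of edges already placed at any fixed vertex is tightly concentrated (Chernoff, via \autoref{cor:chern_binomial}), so all residual degrees stay within $d(1-i/n^2)\pm o(\sqrt{pn})$ of one another throughout, which keeps $q_i\ge(1-o(1))p\ge p'$. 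The delicate point is that the enumeration estimates are accurate only while $A_i\cup B_i$ is sparse: near the end of the process the residual instance becomes dense and the error terms blow up.

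I would handle this tail by a two-round exposure — stop once a $(1-o(1))$-fraction of every vertex's edge budget has been committed, so that the residual instance is a sparse bipartite graph of maximum degree $o(pn)$, on which one either reruns the same coupling with the roles of edges and non-edges exchanged or, since $\Gnnpprime$ has by then spent almost all of its edges, completes $\Rnnp$ arbitrarily. The hypothesis $1-p\gg(\log n/n)^{1/4}$ is precisely what keeps the complementary structure dense enough for the enumeration machinery to remain valid across this exchange. For the upper containment I would run the mirror argument on bipartite complements: $\Knn\setminus\Rnnp$ is a uniform $(1-p)n$-regular bipartite graph, $\Knn\setminus\Gnnpprime$ is $\G(n,n,1-p')$, and $\Rnnp\subseteq\Gnnpprime$ is equivalent to $\Knn\setminus\Gnnpprime\subseteq\Knn\setminus\Rnnp$; now it is $p$ itself — the residual-edge density on the complement side — that must be bounded below, which is the origin of the stronger requirement $p\gg(\log^3 n/n)^{1/4}$.

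The main obstacle is exactly the uniform control of the conditional edge-probabilities $q_i$ over the entire exposure. The enumeration asymptotics are sharp only while the exposed graph is sparse, so the error terms in the residual degree sequence must be tracked down to $o(\sqrt{pn})$ precision — finer than the $p-p'$ slack built into the coupling — and the truncation-and-leftover analysis must not reintroduce a coarser error anywhere. This is where the sharp (rather than order-of-magnitude) enumeration results of \cite{isaev2018complex} are indispensable, and where both density conditions in the statement originate.
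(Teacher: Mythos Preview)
The paper does not prove \autoref{bipartite_sandwich}; it merely quotes this result from \cite{klimovsova2023sandwiching} in the concluding remarks, as an alternative tool that \emph{could} replace the enumeration-based \autoref{mcKay} in the main argument (at the cost of a worse range for $k$). There is therefore no proof in the paper to compare your proposal against.

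As a sketch of how sandwich theorems of this type are proved, your outline is broadly in the right spirit: a sequential exposure coupling driven by sharp enumeration asymptotics, with concentration of residual degrees to keep the conditional edge-probabilities uniformly close to $p$. This is indeed the lineage of arguments going back to Kim--Vu and refined in \cite{gao} and \cite{klimovsova2023sandwiching}. That said, your write-up is a high-level plan rather than a proof: the precise form of the enumeration error terms, the exact choice of the truncation time in the two-round scheme, the verification that residual degree fluctuations stay within the $p-p'$ slack, and the genuine origin of the exponent $1/4$ in both hypotheses are all asserted rather than established. In particular, your explanation that the condition $1-p\gg(\log n/n)^{1/4}$ arises from ``keeping the complementary structure dense enough'' and that $p\gg(\log^3 n/n)^{1/4}$ comes from the complement argument is heuristic; in \cite{klimovsova2023sandwiching} these thresholds emerge from quantitative bounds in the enumeration and concentration steps that you have not reproduced. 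So while nothing you wrote is clearly wrong as a strategy, it does not constitute a proof, and there is nothing in the present paper to validate it against.
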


\noindent  These sandwich theorems can reduce the study of any monotone graph property of the random $d$-regular (bipartite) graphs in the regime $d \gg \log n$ to study the same property in   $\G(n,p)$ (or $\Gnnp$), which can  usually  be handled much easier.

In order to prove that every line intersects our carefully designed random point set in at most $k$ points, we might apply the above sandwich theorem and bound the probability that a generic line contains more $k$ points. Here we can apply the original variant of Chernoff's bound to estimate the intersection size within a subgrid $\cG_{i,j}$.
However, the condition
$p\gg\left(\frac{\log^{3}n}{n}\right)^{1/4}$ means that we could only use it in the range $k\gg (n\log n)^{3/4}$. Note that it is  commonly believed that the statement of \autoref{bipartite_sandwich} also holds when $p\gg \log^cn /n$ for some absolute constant $c$, just like in the generic case of the sandwich theorem of Gao, Isaev and McKay \cite{gao}.

{
    \footnotesize
    \bibliographystyle{abbrv}
	\bibliography{main}

\begin{thebibliography}{10}

\bibitem{Aichholzer/Eppstein/Hainzl:2023}
O.~Aichholzer, D.~Eppstein, and E.-M. Hainzl.
\newblock Geometric dominating sets - a minimum version of the
  {N}o-{T}hree-{I}n-{L}ine problem.
\newblock {\em Computational Geometry}, 108:Article 101913, 2023.

\bibitem{anderson1979update}
D.~B. Anderson.
\newblock Update on the no-three-in-line problem.
\newblock {\em Journal of Combinatorial Theory, Series A}, 27(3):365--366,
  1979.

\bibitem{ball1997maximal}
S.~Ball, A.~Blokhuis, and F.~Mazzocca.
\newblock Maximal arcs in desarguesian planes of odd order do not exist.
\newblock {\em Combinatorica}, 17(1):31--41, 1997.

\bibitem{ball2005bounds}
S.~Ball and J.~W.~P. Hirschfeld.
\newblock Bounds on $(n, r)$-arcs and their application to linear codes.
\newblock {\em Finite Fields and Their Applications}, 11(3):326--336, 2005.

\bibitem{bose1947mathematical}
R.~C. Bose.
\newblock Mathematical theory of the symmetrical factorial design.
\newblock {\em Sankhy{\=a}: The Indian Journal of Statistics}, pages 107--166,
  1947.

\bibitem{Brass2005}
P.~Brass, W.~O.~J. Moser, and J.~Pach.
\newblock {\em Lattice Point Problems}, pages 417--433.
\newblock Springer New York, New York, NY, 2005.

\bibitem{denniston1969some}
R.~H. Denniston.
\newblock Some maximal arcs in finite projective planes.
\newblock {\em Journal of Combinatorial Theory}, 6(3):317--319, 1969.

\bibitem{dudeney1958amusements}
H.~E. Dudeney.
\newblock {\em Amusements in mathematics}, volume 473.
\newblock Courier Corporation, 1917.

\bibitem{Eppstein:2018}
D.~Eppstein.
\newblock {\em Forbidden configurations in discrete geometry}.
\newblock Cambridge University Press, 2018.

\bibitem{flammenkamp1998progress}
A.~Flammenkamp.
\newblock {Progress in the no-three-in-line problem, II}.
\newblock {\em Journal of Combinatorial Theory, Series A}, 81(1):108--113,
  1998.

\bibitem{gao}
P.~Gao.
\newblock personal communication.

\bibitem{green100}
B.~Green.
\newblock 100 open problems.
\newblock Manuscript, available on request to Professor Green.

\bibitem{greenhill2009random}
C.~Greenhill and B.~D. McKay.
\newblock Random dense bipartite graphs and directed graphs with specified
  degrees.
\newblock {\em Random Structures \& Algorithms}, 35(2):222--249, 2009.

\bibitem{Guy/Kelly:1968}
R.~K. Guy and P.~A. Kelly.
\newblock The no-three-in-line problem.
\newblock {\em Canadian Mathematical Bulletin}, 11:527--531, 1968.

\bibitem{hall1975some}
R.~R. Hall, T.~H. Jackson, A.~Sudbery, and K.~Wild.
\newblock Some advances in the no-three-in-line problem.
\newblock {\em Journal of Combinatorial Theory, Series A}, 18(3):336--341,
  1975.

\bibitem{isaev2018complex}
M.~Isaev and B.~D. McKay.
\newblock Complex martingales and asymptotic enumeration.
\newblock {\em Random Structures \& Algorithms}, 52(4):617--661, 2018.

\bibitem{klimovsova2023sandwiching}
T.~Klimo{\v{s}}ov{\'a}, C.~Reiher, A.~Ruci{\'n}ski, and M.~{\v{S}}ileikis.
\newblock Sandwiching biregular random graphs.
\newblock {\em Combinatorics, Probability and Computing}, 32(1):1--44, 2023.

\bibitem{10.1007/978-3-540-68880-8_25}
H.~Lefmann.
\newblock No $l$ grid-points in spaces of small dimension.
\newblock In {\em Algorithmic Aspects in Information and Management}, pages
  259--270, Berlin, Heidelberg, 2008. Springer Berlin Heidelberg.

\bibitem{liebenau2023asymptotic}
A.~Liebenau and N.~Wormald.
\newblock Asymptotic enumeration of digraphs and bipartite graphs by degree
  sequence.
\newblock {\em Random Structures \& Algorithms}, 62(2):259--286, 2023.

\bibitem{linial2014chernoff}
N.~Linial and Z.~Luria.
\newblock Chernoff's inequality-a very elementary proof.
\newblock {\em arXiv preprint arXiv:1403.7739}, 2014.

\bibitem{mckay2010subgraphs}
B.~D. McKay.
\newblock Subgraphs of random graphs with specified degrees.
\newblock In {\em Proceedings of the International Congress of Mathematicians
  2010}, pages 2489--2501. World Scientific, 2010.

\bibitem{nagy2023extensible}
D.~T. Nagy, Z.~L. Nagy, and R.~Woodroofe.
\newblock The extensible no-three-in-line problem.
\newblock {\em European Journal of Combinatorics}, 114:103796, 2023.

\bibitem{pach1997graphs}
J.~Pach and G.~T{\'o}th.
\newblock Graphs drawn with few crossings per edge.
\newblock {\em Combinatorica}, 17(3):427--439, 1997.

\bibitem{por2007no}
A.~P{\'o}r and D.~R. Wood.
\newblock No-three-in-line-in-3{D}.
\newblock {\em Algorithmica}, 47(4):481--488, 2007.

\bibitem{Roth:1951}
K.~F. Roth.
\newblock On a problem of {H}eilbronn.
\newblock {\em The Journal of the London Mathematical Society}, 26:198--204,
  1951.

\bibitem{Segre:1955a}
B.~Segre.
\newblock Curve razionali normali e {$k$}-archi negli spazi finiti.
\newblock {\em Annali di Matematica Pura ed Applicata (4)}, 39:357--379, 1955.

\bibitem{Segre:1955}
B.~Segre.
\newblock Ovals in a finite projective plane.
\newblock {\em Canadian Journal of Mathematics}, 7:414--416, 1955.

\bibitem{szemeredi1983extremal}
E.~Szemer{\'e}di and W.~T. Trotter.
\newblock Extremal problems in discrete geometry.
\newblock {\em Combinatorica}, 3:381--392, 1983.

\end{thebibliography}
}

\end{document}